\def\K{\mathbb{K}}
\def\T{\mathbb{T}}
\def\R{\mathbb{R}}
\def\I{ \textit{I} }
\def\II{ \textit{II} }
\def\III{ \textit{III} }
\def\padicdisc{\Delta}
\def\defi{\emph}
\def\bigvariables{X_1^{\pm 1},\ldots,X_d^{\pm 1}}
\def\smallvariables{x_1^{\pm 1},\ldots,x_d^{\pm 1}}
\newcommand{\plano}[2]{\ensuremath{H_{#1,#2}}}
\newcommand{\planopn}{\ensuremath{H_{p,n}}}
\newcommand{\planouniv}[1]{\ensuremath{\Delta(0)_{#1}}}
\newcommand{\discn}[1]{\ensuremath{\Delta_{#1,n}}}
\newcommand{\disc}[2]{\ensuremath{\Delta_{#1,#2}}}
\DeclareMathOperator\rad{rad}
\newtheorem{theorem}{Theorem}[section]
\newtheorem{lemma}[theorem]{Lemma}
\newtheorem{corollary}[theorem]{Corollary}
\theoremstyle{definition}
\newtheorem{definition}[theorem]{Definition}
\newtheorem{example}[theorem]{Example}
\newtheorem{remark}[theorem]{Remark}
\title{The tropical discriminant in positive characteristic}
\author{Luis Felipe Tabera}
\begin{document}
\maketitle

\begin{abstract}
We study singularities in tropical hypersurfaces defined by a valuation over a 
field of positive characteristic. We provide a method to compute the set of 
singular points of a tropical hypersurface in positive characteristic and the 
p-adic case. This computation is applied to determine all maximal cones of the 
tropical linear space of univariate polynomials of degree $n$ and characteristic 
$p$ with a fixed double root and the fan of all tropical polynomials that have 
$0$ as a double root independently of the characteristic. We also compute, by 
pure tropical means, the number of vertices, edges and 2-faces of the Newton 
polytope of the discriminant of polynomials of degree $p$ in characteristic $p$.
\end{abstract}

\section{Introduction}
Given $A\subseteq \mathbb{Z}^d$ a finite subset, there is a close relation 
between the theory of $A$-discriminants and coherent subdivisions and the 
secondary polytope of $A$. We refer to \cite{GKZ-book} for a basic reference on 
this relation. The combinatorial nature underlying the $A$-discriminant is more 
apparent computing the tropical discriminant of the support $A$, 
\cite{tropical-discriminant, 
singular-tropical-hypersurfaces,Tesis-master-Ochse}. However, this study is 
usually restricted to the case of characteristic $0$. In this paper, we extend 
the notion of tropical singularity in a hypersurface introduced in 
\cite{singular-tropical-hypersurfaces} to the characteristic $p$ and the 
$p$-adic case, with the aim that this study will help understanding the 
reduction of the $A$-discriminant mod $p$.

With this idea in mind, let $\K$ be an algebraically closed field with a 
valuation $v:\K^*\rightarrow \T\subseteq \mathbb{R}$. Let $k$ be the residue 
field and let $p$ be a prime number. There are three possibilities for the 
characteristics of $\K$ and $k$.

\begin{itemize}
 \item $char(\K)=char(k)=0$, (equi)characteristic zero.
 \item $char(\K)=char(k)=p$, (equi)characteristic $p$.
 \item $char(\K)=0$, $char(k)=p$, $p$-adic case.
\end{itemize}

If $V\subseteq (\K^*)^d$ is an algebraic variety of dimension $n$ in the torus, 
its tropicalization is the closure in $\R^n$ of the image of $V$ taking the 
valuation component-wise,
\[trop(V)=\overline{\{(v(a_1),\ldots,v(a_d))\in \R^d\ |\ (a_1,\ldots, a_d)\in 
V}\}\]
These varieties are polyhedral complexes of dimension $n$ in $\R^d$ and are
the base of tropical geometry. In this paper, we are 
interested in the case that $\K$ is a field of characteristic $p$ and 
$V=\Delta_A$ is the $A$-discriminant, the set of polynomial of support $A$ 
having a double root in $(\K^*)^d$.

In many cases, tropical geometry do not depend in the characteristic of the 
underlying field. For instance, Kapranov's theorem does not depend on the 
characteristic \cite{Lifting-Constr,Tesis-Speyer,Tesis-Tab}.
Also, if we fix $d+1$-supports $A_0,\ldots, A_{d}$ 
from the results of \cite{Sturmfels-polytope_resultant,Resultantes-trop} it 
follows that the $(A_0,\ldots, A_d)$ tropical resultant does not depend on the 
characteristic of the field. On the other hand, it is known that the tropical 
Grassmannian --understood as the image of the Grassmannian under a valuation--
does depend on the characteristic of the ground field \cite{tgrassmanian}. This 
is also the case of the discriminant. For instance, if $N_{p,n}$ is the Newton 
polytope of the discriminant of a univariate polynomial of degree $n$ in 
characteristic $p$, it is well known that $N_{n,0}$ is combinatorially a 
$(n-1)$-hypercube, while we prove (See Corollary~\ref{cor:tria_y_cuad}) that 
the 2-faces of $N_{p,n}$ are quadrangles or triangles.

The paper is structured as follows. In Section~\ref{sec:preliminares}, we 
introduce the basic notions and the notation. In 
Section~\ref{sec:singularidades} the set of singular points of a hypersurface 
is computed using pure tropical techniques, thus, giving a purely combinatorial 
method to decide if a polynomial is in the $A$-discriminant in characteristic 
$p$ or the $p$-adics. Section~\ref{sec:planopn} is the main section, we study
the (tropical) linear space of all tropical univariate polynomials of 
degree $n$ having a double root in characteristic $p$, some results on the 
tropical discriminant in characteristic $p$ and we also describe the set of 
tropical polynomials that are singular independently of the characteristic. In 
\ref{sec:padics}, we make a brief comment on the case of the $p$-adics.

\section{Preliminaries}\label{sec:preliminares}

Let $\K$ be an algebraically closed field. Let $\mathbb{Q}\subseteq \T\subseteq 
\R$ be a subgroup of the reals and $v:\K^*\rightarrow \T$ a nontrivial 
valuation.
We will 
denote by capital letters $A,B,X$ the elements and variables in $\K$ and by 
lower case $a,b,x$ the elements and variables in $\T$. Given a Laurent 
polynomial $F=\sum_{i\in A} A_ix^i\in \K[\bigvariables]$, $i=(i_1,\ldots, 
i_d)$, $X=X_1\cdots X_d$ we call the \defi{tropicalization} of $F$ to the 
formal tropical polynomial $f=\oplus_{i\in A} a_ix^i$, with $a_i=v(A_i)$, $i\in 
A$. The function associated to $f$ is the piecewise-affine function
\[f(b)=\min_{i\in A}\{a_i +\langle i,b\rangle \}\]
Different polynomials may define the same function, for instance $0\oplus 
0x\oplus 0x^2$ and $0\oplus 1x\oplus 0x^2$. $f(b)$ is the expected valuation of 
$F(B)$ for $B$ an element such that $v(B)=b$. But there may be some $B$ such 
that $f(v(B))\leq v(F(B))$. This can only happen if $b$ is a \defi{tropical 
root} of $f$. That is, if the value $f(b)$ is attained (at least) at two 
different monomials. We denote by $\mathcal{T}(f)$ the set of tropical roots of 
$f$.
\[\mathcal{T}(f)=\{b\in \R^n\ |\ \exists i,j\in A, f(b)=a_i+\langle 
b,i\rangle=a_j+\langle b,j\rangle\}\]

If $f$ is the tropicalization of $F$ and $V=\mathcal{V}(F)\subseteq (\K^*)^d$ 
is the hypersurface defined by $F$ then, by the fundamental theorem of tropical 
geometry
\[\mathcal{T}(f)=\overline{\{(v(B_1),\ldots,v(B_d))\ |\ (B_1,\ldots, B_d)\in 
V\}}.\]
See for instance \cite{Lifting-Constr, Tesis-Tab}.

\begin{definition}
Let $f=\oplus_{i\in I} a_ix^i\in \T[\smallvariables]$ be a Laurent tropical 
polynomial in $d$ variables. We say that $f$ is a \defi{singular polynomial} 
(with respect to a valuation $v:\K\rightarrow \T$) and that 
$b=(b_1,\ldots,b_d)$ is a \defi{singular point of} $f$ if there exists an 
algebraic counterpart polynomial $F=\sum_{i\in I} A_iX^i\in 
\K[\bigvariables]$, $B=(B_1,\ldots,B_d)\in (\K^*)^d$ with $v(A_i)=a_i$, 
$v(B_i)=b_i$ and such that $B$ is a singular point of the hypersurface defined 
by $F$.
\end{definition}

\begin{example}
Let $f=0\oplus 0 \oplus 0x^2$ and $g=0\oplus 1x \oplus 0x^2$. Then 
$\mathcal{T}(F)=\mathcal{T}(G)=\{0\}$. If we are working in the characteristic 
zero case, then $f$ is singular, since it is the tropicalization of 
$X^2-2X+1=(X-1)^2$. Moreover, $g$ cannot be singular, since if $G=AX^2+BX+C$ is 
a polynomial with tropicalization $g$, then $v(A)=v(C)=0$, $v(B)=1$ and 
$v(B^2-4AC)=0$, so $B^2-4AC\neq 0$ and $G$ does not have a double root.

On the other hand, if we now work with a $2$-adic valuation, $v_2(2)=1$, then 
$g$ is the tropicalization of $X^2-2X+1$ and $g$ is singular. $f$ cannot be 
singular in this case, since if $F=AX^2+BX+C$ has $v_2(A)=v_2(B)=v_2(C)=0$, 
then $v_2(B^2-4AC)=\min\{0,2\}=0$ and the discriminant does not vanish.
\end{example}

We now define the \defi{tropical Euler derivatives} introduced in 
\cite{singular-tropical-hypersurfaces}, this is the main tool we use to deal 
with tropical singularities.

\begin{definition}
Let $F=\sum_{i\in I}A_iX^i\in \K[\bigvariables]$. Let 
$L=B_0+B_1X_1+\ldots+B_dX_d\in \mathbb{Z}[\bigvariables]$ be an linear 
polynomial defined over the integers. We define the \defi{Euler derivative with 
respect to $L$} to
\[\frac{\partial F}{\partial L} = B_0F+\sum_{j=1}^d B_jX_j\frac{\partial 
F}{\partial X_j} = \sum_{i\in I} L(i)A_iX^i\in \K[\bigvariables]\]
Analogously, let $f\in \sum_{i\in I}a_ix^i\in \T[\smallvariables]$. The 
\defi{Euler derivative with respect to $L$} is
\[\frac{\partial f}{\partial L} = \bigoplus_{\substack{i\in I\\L(i)\neq 0}} 
(v(L(i))+a_i)x^i\in \T[\smallvariables]\]
Note that the definition of partial Euler derivative depends on the specific 
valuation of the field. Note also that if $f$ is the tropicalization of $F$, 
$v(A_i)=a_i$, then $v(L(i)A_i)=v(L(i))+a_i$ and $\frac{\partial f}{\partial L}$ 
is the tropicalization of $\frac{\partial F}{\partial L}$.
\end{definition}

\begin{remark}
Let $f$ be a tropical polynomial with support $A$, let 
$L=b_0+b_1x_1+\ldots+b_dx_d\in 
\mathbb{Z}[\smallvariables]$ and $\gcd(b_0,\ldots,b_d)=1$. Then taking the 
Euler derivative in $f$ has an easy geometric interpretation.
\begin{enumerate}
\item Characteristic $0$, we eliminate from the support of $f$ the monomials in 
the hyperplane $\{L=0\}$, the coefficients remain unchanged.
\item Characteristic $p$, we eliminate from the support of $f$ all the 
monomials lying at lattice distance $r\equiv 0\mod p$ from the hyperplane 
$\{L=0\}$, 
the rest of the coefficients remain unchanged.
\item $p$-adic case, we eliminate from the support of $f$ the monomials lying 
in the hyperplane $L=0$. If a monomial $i_0$ lies at lattice distance $r\equiv 
0\mod p$ from $L=0$, we add $v_p(r)$ to the corresponding coefficient $a_{i_0}$ 
of $f$.
\end{enumerate}
\end{remark}

\begin{example}
Let $f=a_0\oplus a_1x\oplus a_2x^2\oplus a_3x^3\oplus a_4x^4\oplus a_5x^5$. Let 
$L=x-4$. Then, the Euler derivative of $f$ with respect to $L$ is:
\begin{itemize}
 \item In the characteristic $0$ case. In the characteristic $p$ case or 
$p$-adic case with $p>3$, $\frac{\partial f}{\partial L}=a_0\oplus a_1x\oplus 
a_2x^2\oplus a_3x^3\oplus a_5x^5$.
 \item If the characteristic of $\K$ is $p=2$, $\frac{\partial f}{\partial 
L}=a_1x\oplus 
a_3x^3\oplus a_5x^5$.
 \item If the characteristic is $p=3$, $\frac{\partial f}{\partial L}=a_0\oplus 
a_2x^2\oplus a_3x^3\oplus a_5x^5$.
 \item In the $2$-adic case, $\frac{\partial f}{\partial L}=(2+a_0)\oplus 
a_1x\oplus (1+a_2)x^2\oplus a_3x^3\oplus a_5x^5$.
 \item In the $3$-adic case, $\frac{\partial f}{\partial L}=a_0\oplus 
(1+a_1)x\oplus a_2x^2\oplus a_3x^3\oplus a_5x^5$.
\end{itemize}
\end{example}

\section{Singularities in tropical hypersurfaces}\label{sec:singularidades}

\begin{lemma}\label{lem:is_a_fan}
If the tropical discriminant $\Delta_{p,A}$ of polynomials of support $A$ in 
characteristic $p$ is non-empty, then it is a rational polyhedral fan of pure 
dimension in $\R^{|A|}$.
\end{lemma}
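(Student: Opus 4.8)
The plan is to realize $\Delta_{p,A}$ as the tropicalization of the algebraic $A$-discriminant and then invoke the structure theorem of tropical geometry. First I would fix coordinates on the coefficient space: a tropical polynomial with support $A$ is nothing but its vector $(a_i)_{i\in A}\in\R^{|A|}$, and its algebraic counterparts $F=\sum_{i\in A}A_iX^i$ with prescribed finite valuations range over the torus $(\K^*)^{|A|}$. Let $\Delta_A\subseteq \K^{|A|}$ be the Zariski closure of the locus of those $F$ admitting a singular point $B\in(\K^*)^d$, i.e. a point with $F(B)=0$ and $X_j\,\partial F/\partial X_j(B)=0$ for all $j$; this is the classical $A$-discriminant variety. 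By the definition of singular tropical polynomial, $\Delta_{p,A}$ is exactly the set of valuation vectors $(v(A_i))_{i\in A}$ as $F$ ranges over the torus points of $\Delta_A$, so by the fundamental theorem of tropical geometry $\Delta_{p,A}=trop(\Delta_A)$.

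Next I would record the two structural features of $\Delta_A$ that force its tropicalization to be a fan of pure dimension. For \emph{irreducibility}, consider the incidence variety $\widetilde\Delta=\{(F,B): F(B)=0,\ X_j\,\partial F/\partial X_j(B)=0\ \forall j\}$ inside $\K^{|A|}\times(\K^*)^d$. For each fixed $B$ the defining conditions are linear and homogeneous in the coefficients $A_i$, with constant corank $d+1$ (the covectors $(B^i)_i$ and $(i_jB^i)_i$ differ from the affinely independent $(1)_i,(i_j)_i$ only by the nonzero scalars $B^i$). Hence $\widetilde\Delta$ is the total space of a vector bundle over the irreducible torus, so it is irreducible, and so is its image $\Delta_A$ under projection to the coefficient space; this argument is insensitive to the characteristic. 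For \emph{homogeneity}, if $F$ has a singular point in the torus then so does $\lambda F$ for every $\lambda\in\K^*$, so $\Delta_A$ is stable under scaling and its defining ideal is homogeneous, i.e. $\Delta_A$ is a cone through the origin.

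Finally I would apply the structure theorem (Bieri--Groves): the tropicalization of an irreducible subvariety of the torus of dimension $m$ is the support of a connected-in-codimension-one, balanced, pure $m$-dimensional rational polyhedral complex, and when the defining ideal is homogeneous this complex is a rational polyhedral fan. Combining this with $\Delta_{p,A}=trop(\Delta_A)$, irreducibility (yielding pure dimension $\dim\Delta_A$) and homogeneity (yielding the fan structure, whose lineality contains the all-ones direction and the monomial tilts $a_i\mapsto a_i+\langle i,w\rangle$ coming from torus automorphisms), gives the claim.

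I expect the main obstacle to lie in the matching of step one together with part of the irreducibility argument. One must check that the combinatorial notion of singular tropical polynomial, which quantifies over lifts with prescribed finite valuations, corresponds exactly to the torus locus of $\Delta_A$ and its tropicalization, rather than to a larger closure that might introduce boundary phenomena from vanishing coefficients; and one must confirm that irreducibility of the $A$-discriminant, classically established over $\mathbb{C}$ via the conormal construction, persists in characteristic $p$. The vector-bundle description of $\widetilde\Delta$ is precisely what makes the latter characteristic-free, so I would rely on it rather than on reflexivity of projective duals, which can fail in positive characteristic.
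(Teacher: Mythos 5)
Your identification of $\Delta_{p,A}$ with $trop(\Delta_A)$, the irreducibility argument via the incidence variety (which is the paper's ``parametrizable, hence absolutely irreducible'' step spelled out), and the appeal to Bieri--Groves for pure dimension all match the paper's route. But the step that produces the \emph{fan} structure is wrong: homogeneity of the defining ideal of $\Delta_A$ does not imply that its tropicalization is a fan. Homogeneity with respect to the standard grading only forces $trop(\Delta_A)$ to be invariant under translation by the all-ones vector (and the torus quasi-homogeneities you mention add further lineality directions), whereas being a fan means invariance under positive scaling $w\mapsto tw$, a much stronger condition. A homogeneous ideal with coefficients of nonzero valuation easily fails this: for $I=(X+Y+TZ)$ over the Puiseux series with $v(T)=1$, the point $(0,0,-1)$ lies in $trop(V(I))$ but $(0,0,-2)$ does not, even though $I$ is homogeneous. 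So your ``homogeneity yields the fan structure'' conclusion does not follow.

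What actually gives the fan structure, and what the paper uses, is that in characteristic $p$ the $A$-discriminant is defined over the prime field $\mathbb{F}_p$, on which any valuation of $\K$ is trivial (every nonzero element of $\overline{\mathbb{F}_p}$ is a root of unity, hence has valuation $0$). This puts one in the constant-coefficient case, where $in_w(I)=in_{tw}(I)$ for all $w$ and all $t>0$, and then the cells of the tropical variety are cones. Note this argument is special to equicharacteristic $p$ (and would also work in equicharacteristic $0$ over $\mathbb{Q}$ with trivial valuation), but fails in the $p$-adic case --- consistently with the paper's later remark that the $p$-adic discriminant is \emph{not} a fan, even though the discriminant ideal there is still homogeneous. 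That the $p$-adic case furnishes a counterexample to your implication within this very paper is the clearest sign the step needs replacing. The rest of your argument can stand once this step is repaired.
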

\begin{proof}
Let $\tilde{\Delta_{p,A}}$ be the $A$-discriminant in characteristic $p$. Since 
it is parametrizable, $\tilde{\Delta_{p,A}}$ is absolutely irreducible. It 
follows from \cite{Bieri-Groves} that $\Delta_{p,A}$ is a rational polyhedral 
complex of the same dimension as $A$. Moreover, since $\tilde{\Delta_{p,A}}$ is 
a variety defined over the prime field $\mathbb{Z}/(p)$, then we are in a 
constant coefficient case, the initial ideal $in_{w}(I)=in_{tw}(I)$, for every 
$w\in \R^n$ and $t\in \R$. It follows from \cite{Lifting-Constr} that the
the cells of $\Delta_{p,A}$ are cones and $\Delta_{p,A}$ is a rational 
polyhedral fan.
\end{proof}

\begin{theorem}[\cite{singular-tropical-hypersurfaces}]
Let $f=\bigoplus_{i\in I} a_i  x^i$ be a tropical polynomial with support
$A$. Let $q\in \mathcal{T}(f)$ be a point in the hypersurface defined by $f$.
Then, $q$ is a singular point of $\mathcal{T}(f)$ if and only if $q\in
\mathcal{T}(\frac{\partial f}{\partial L})$ for all $L$. 

Thus, $f$ defines a singular tropical hypersurface if and only if \[\bigcap_{L}
\mathcal{T}\left(\frac{\partial f}{\partial L}\right)\neq \emptyset.\] This 
intersection can be given by a finite number of Euler derivatives of $f$.
\end{theorem}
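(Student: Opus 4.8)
The plan is to reduce the geometric singularity condition to the simultaneous vanishing of all Euler derivatives, transport this to the tropical side via the fundamental theorem for one direction, and return through a residue-field degeneration for the converse. First I would record the algebraic reformulation of a singular point. For $F=\sum_{i\in I}A_iX^i$ and $B\in(\K^*)^d$, taking $L$ to be the constant $1$ gives the Euler derivative $F$ itself, while taking $L=X_j$ gives $X_j\frac{\partial F}{\partial X_j}$, whose value at $B$ is $B_j\frac{\partial F}{\partial X_j}(B)$. Since $B_j\neq 0$ in the torus, the conditions $\frac{\partial F}{\partial L}(B)=0$ for every integer-linear $L$ are equivalent to $F(B)=0$ together with $\frac{\partial F}{\partial X_j}(B)=0$ for all $j$, that is, to $B$ being a singular point of $\mathcal{V}(F)$. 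This reformulation holds in every characteristic and is the backbone of both directions.

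For the forward implication, suppose $q$ is singular, witnessed by a lift $F$ with $\mathrm{trop}(F)=f$ and a point $B\in(\K^*)^d$ with $v(B)=q$ singular on $\mathcal{V}(F)$. By the reformulation $\frac{\partial F}{\partial L}(B)=0$ for every $L$, so $B$ lies on each hypersurface $\mathcal{V}\!\left(\frac{\partial F}{\partial L}\right)$. Since $\frac{\partial f}{\partial L}$ is the tropicalization of $\frac{\partial F}{\partial L}$, the fundamental theorem yields $q=v(B)\in\mathcal{T}\!\left(\frac{\partial f}{\partial L}\right)$ for all $L$. This direction is routine.

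The converse is the substantial part, and the difficulty is that the hypothesis $q\in\mathcal{T}\!\left(\frac{\partial f}{\partial L}\right)$ must be met for each $L$ by a priori unrelated lifts and points; I must manufacture a single coherent pair $(F,B)$. I would normalize $q=0$ and $f(q)=0$ by the monomial substitution $X_j\mapsto T_jX_j$ with $v(T_j)=q_j$, set $A_q=\{i\in A: a_i+\langle q,i\rangle=f(q)\}$, and pass to the initial form $\bar F=\sum_{i\in A_q}\bar A_iY^i$ over the algebraically closed residue field $k$, whose residue coefficients $\bar A_i\in k^*$ are free because the lift is free. The hypothesis forces, for every $L$ with $\frac{\partial \bar F}{\partial L}\not\equiv 0$, that this derivative has at least two monomials and hence a zero in $(k^*)^d$. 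Since the Euler derivatives of $\bar F$ are the integer combinations of $\bar F,Y_1\frac{\partial\bar F}{\partial Y_1},\ldots,Y_d\frac{\partial\bar F}{\partial Y_d}$, their common torus zeros are exactly the torus singular points of $\bar F$. The heart of the argument is therefore to choose the residues $\bar A_i$ so that these finitely many equations acquire a common solution $\bar B\in(k^*)^d$; here I would invoke that the combinatorial hypothesis is precisely the nonemptiness of the tropicalized discriminant and use Lemma~\ref{lem:is_a_fan} together with the parametrization of $\tilde\Delta_{p,A}$. Finally I would lift $(\bar F,\bar B)$ to $(F,B)$ over $\K$ by a Hensel-type deformation, reinstating the higher-valuation monomials of $f$ as a perturbation that preserves both the singular point and the tropicalization.

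The main obstacle is exactly this coordination-and-lifting step: passing from ``each Euler derivative vanishes somewhere over $q$'' to ``one point is simultaneously singular,'' and then promoting that singular point from $k$ to $\K$. The per-$L$ witnesses are unrelated, and the proof must exploit the freedom in the residues and the absolute irreducibility of the discriminant to force agreement. For the finiteness clause I would observe that $\frac{\partial f}{\partial L}$ depends on $L$ only through the partition of $A$ by the value of $L(i)\bmod p$ (and, in the $p$-adic case, through the bounded valuations $v_p(L(i))$ that can still affect whether $q$ is a root); as $A$ is finite, only finitely many derivatives are relevant to the intersection, so $\bigcap_L\mathcal{T}\!\left(\frac{\partial f}{\partial L}\right)$ is already cut out by finitely many $L$, for instance by those separating the monomials of $A_q$.
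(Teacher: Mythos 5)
Your forward direction is fine and matches the standard argument: taking $L=1$ and $L=X_j$ recovers $F$ and $X_j\frac{\partial F}{\partial X_j}$, so simultaneous vanishing of all $\frac{\partial F}{\partial L}(B)$ is equivalent to $B$ being a singular point, and the fundamental theorem transports this to $q\in\mathcal{T}(\frac{\partial f}{\partial L})$ for all $L$. The problem is the converse, which you correctly identify as the substantial part but do not actually prove. At the decisive step --- producing a single $\bar B\in(k^*)^d$ at which all initial Euler derivatives vanish simultaneously, given only that each one separately attains its minimum twice --- you propose to ``invoke that the combinatorial hypothesis is precisely the nonemptiness of the tropicalized discriminant'' and to use Lemma~\ref{lem:is_a_fan}. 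This is circular: the equivalence between that combinatorial hypothesis and membership in the tropicalization of the relevant variety is exactly what the theorem asserts, and Lemma~\ref{lem:is_a_fan} only says that the tropical discriminant, if nonempty, is a fan; it gives no criterion for a given $f$ to lie in it. Your sketch also quietly loses the $p$-adic case: passing to the initial form $\bar F$ over the residue field discards the shifts $v_p(L(i))$ that enter the definition of $\frac{\partial f}{\partial L}$, so ``$q\in\mathcal{T}(\frac{\partial f}{\partial L})$'' is not equivalent to a condition on $\bar F$ alone in mixed characteristic.

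The paper closes this gap by a different mechanism. For a fixed singular point (after your normalization, a point $B$ with $v(B)=q$), the lifts $F$ singular at $B$ form a \emph{linear} subspace of the coefficient space, cut out by $F(B)=0$ and $X_j\frac{\partial F}{\partial X_j}(B)=0$; the minimal-support linear forms vanishing on this subspace (its circuits) are precisely the Euler derivatives with minimal support evaluated at $B$. The fact that the circuits of a linear space form a tropical basis holds over any valued field, independently of the characteristic and of the valuation, and this is exactly the input needed to upgrade ``each tropical circuit condition holds at $q$'' to ``there exists an actual point of the linear space with the prescribed valuations,'' i.e.\ a coherent pair $(F,B)$. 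That is the content of the paper's short proof, which otherwise defers to \cite{singular-tropical-hypersurfaces}. To salvage your degeneration approach you would in effect have to reprove this tropical-basis statement for linear ideals; as written, the coordination-and-lifting step is a genuine gap, not an omitted routine verification.
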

\begin{proof}
The proof given in \cite{singular-tropical-hypersurfaces} is written for the 
characteristic zero case, but it works in any case. The result in the general 
case follows from the fact that given a linear space $V\subseteq (\K^*)^d$, a 
tropical basis of $V$ is given by the linear polynomials vanishing in $V$ with 
minimal support. These polynomials form a tropical basis independently of the 
characteristic of the field $\K$ nor the valuation $v$.
\end{proof}

\begin{example}
Let $f$ be the tropical polynomial $f=0\oplus 0x^2\oplus 0y^2\oplus 
0x^2y^2\oplus 
1x^3$. Then, it is easy to check that $(0,0)$ is a singularity of $f$ in 
characteristic different from $2$, but that $f$ is not singular in 
characteristic $2$, because $\frac{\partial f}{\partial x-y}=x^3$ that has no 
tropical root.
\end{example}

\begin{corollary}\label{cor:solo_necesito_p}
Let $f=a_0\oplus a_1x\oplus\ldots\oplus a_nx^n\in \T[x]$ be a tropical 
univariate 
polynomial of degree $n$ in characteristic $p\neq 0$, let $a\in 
\mathcal{T}(f)$, then $f$ is singular with double root $a$ if and only if $a\in 
\mathcal{T}(\frac{\partial f}{\partial x-i})$, $i=0,\ldots, p-1$.
\end{corollary}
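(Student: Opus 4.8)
The plan is to deduce this from the preceding Theorem, whose criterion demands $a\in\mathcal{T}(\frac{\partial f}{\partial L})$ for \emph{all} integer linear forms $L$; the whole content of the corollary is that, in one variable and characteristic $p$, this infinite family of conditions collapses to the $p$ derivatives coming from $L=x-i$, $i=0,\ldots,p-1$. The forward implication is then immediate: if $f$ is singular with double root $a$, the Theorem gives $a\in\mathcal{T}(\frac{\partial f}{\partial L})$ for every $L$, and the forms $x-i$ are a subfamily. So the real work lies in the converse, and for that I would prove the sharper statement that every Euler derivative $\frac{\partial f}{\partial L}$ is equal, as a tropical polynomial, to $f$ or to one of the $\frac{\partial f}{\partial(x-i)}$.

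First I would reduce to primitive $L=b_0+b_1x$ with $\gcd(b_0,b_1)=1$, which is the range for which the geometric description in the Remark is stated. Scaling $L$ by an integer $c$ with $\gcd(c,p)=1$ changes neither the surviving support (since $p\mid cL(j)\iff p\mid L(j)$) nor the surviving coefficients (the factor $c$ reduces to a unit of the prime field and so has valuation $0$), whence $\frac{\partial f}{\partial(cL)}=\frac{\partial f}{\partial L}$; thus primitive representatives suffice.

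Next I would invoke the characteristic-$p$ description of the Remark: $\frac{\partial f}{\partial L}$ is obtained from $f$ by deleting exactly those monomials $x^j$ with $L(j)=b_1 j+b_0\equiv 0\pmod p$, leaving all remaining coefficients $a_j$ untouched. Hence $\frac{\partial f}{\partial L}$ is determined entirely by which residue class of exponents it deletes modulo $p$. If $b_1\equiv 0\pmod p$, then primitivity forces $b_0\not\equiv 0$, so nothing is deleted and $\frac{\partial f}{\partial L}=f$. If $b_1\not\equiv 0\pmod p$, then $b_1 j+b_0\equiv 0$ has the unique solution $j\equiv i_0:=-b_0 b_1^{-1}\pmod p$ with $i_0\in\{0,\ldots,p-1\}$, and this is precisely the class deleted by $L=x-i_0$; since the surviving coefficients also agree, $\frac{\partial f}{\partial L}=\frac{\partial f}{\partial(x-i_0)}$.

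Finally I would assemble the converse: assuming $a\in\mathcal{T}(\frac{\partial f}{\partial(x-i)})$ for $i=0,\ldots,p-1$ and using the standing hypothesis $a\in\mathcal{T}(f)$, the classification above shows $a\in\mathcal{T}(\frac{\partial f}{\partial L})$ for every primitive $L$, so the Theorem yields that $a$ is a singular point, i.e.\ $f$ is singular with double root $a$. The one point I would be most careful about is the scaling/normalization argument together with the claim that in characteristic $p$ the deleted monomials depend only on $L\bmod p$; once that is pinned down, the rest is bookkeeping.
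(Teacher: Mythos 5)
Your argument is correct and is essentially the paper's own proof: the paper's entire justification is the one-line claim that for $L=ax+b$ the derivative $\frac{\partial f}{\partial L}$ equals, up to multiplication by a constant, either $f$ or one of the $\frac{\partial f}{\partial (x-i)}$, which is precisely the classification (via reduction of $L$ modulo $p$ and the case split on whether $p\mid b_1$) that you carry out in detail. Your version just makes explicit the normalization to primitive $L$ and the fact that nonzero elements of the prime field have valuation $0$, both of which the paper leaves implicit.
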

\begin{proof}
If $L=ax+b$, then $\frac{\partial f}{\partial L}$ is, up to multiplication by a 
constant, equal 
to $f$ or one of the Euler derivatives $\frac{\partial f}{\partial x-i}$, 
$i=0,\ldots, p-1$.
\end{proof}

The case of characteristic zero is well understood. The tropical discriminant 
is combinatorially dual to the Newton polytope of the discriminant, see,
as a starting point, 
\cite{tropical-discriminant}, \cite{GKZ-polytope-resultant}, \cite{GKZ-book}. 
Here, we will follow the approach of \cite{sing-fixed-point}, 
\cite{tropical-surf-singularities}. We study the univariate polynomials of 
degree $n$ that have a singular point in $0\in \T$. Any other case can be 
reduced to this situation.

\section{The linear spaces $\planopn$}\label{sec:planopn}

In this section we follow the ideal of \cite{sing-fixed-point}. We fix the 
tropical point $0$ and compute all polynomials with $0$ as double root. The 
space obtained is a tropical linear space.

\begin{definition}
We define $\planopn$, with $p$ prime or zero, the (tropical) linear space 
of tropical polynomials of degree $n$ such that $0$ is a tropical double root.
If $\discn{p}$ is the set of tropical singular polynomials of degree $n$ in 
characteristic $p$ then \[\discn{p}=\bigcup_{c\in \T} \{f(cx)|f\in 
\planopn\}\]
\end{definition}

However, passing from $\planopn$ to $\discn{p}$ is not trivial. If one 
wants to use the techniques developed here to study $\discn{p}$, some knowledge 
of tropical polynomials with two tropical roots is needed. This is an open 
problem even in characteristic zero.

\begin{theorem}
$\plano{0}{n}$ is the set of tropical polynomials $f=\oplus_{i=0}^n a_ix^i$ 
such that the minimum of $\{a_i, 0\leq i\leq n\}$ is attained (at least) at
three different monomials $\{j,k,l\}$. It is a rational fan of codimension 2 in 
$\T^n$, the maximal cones can be marked by the monomials $\{j,k,l\}$ where the 
minimum is attained, so it has $\binom{n+1}{3}$ maximal cells.
\end{theorem}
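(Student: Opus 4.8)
The plan is to first translate the defining condition---that $0$ be a tropical double root of $f$---into a purely combinatorial statement about the coefficient vector $(a_0,\dots,a_n)$, using the characterization of singular points by Euler derivatives, and then read off the fan structure directly. By the theorem of \cite{singular-tropical-hypersurfaces}, $0\in\mathcal{T}(f)$ is a singular point exactly when $0\in\mathcal{T}(\partial f/\partial L)$ for every linear $L$. In characteristic $0$ the Euler derivative with respect to $L=b_0+b_1x$ (with $\gcd(b_0,b_1)=1$) simply deletes from the support the monomial lying on $\{L=0\}$ and leaves the remaining coefficients unchanged (characteristic-$0$ case of the Remark above). For a univariate polynomial this locus is the single point $i=-b_0/b_1$, so $\partial f/\partial L$ is, up to an additive constant, either $f$ itself (when $-b_0/b_1\notin\{0,\dots,n\}$) or the polynomial obtained by erasing the degree-$c$ term for some $c\in\{0,\dots,n\}$. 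Hence---this is the characteristic-$0$ analogue of Corollary~\ref{cor:solo_necesito_p}---the condition for all $L$ collapses to: $0\in\mathcal{T}(f)$ together with $0\in\mathcal{T}(\partial f/\partial(x-c))$ for $c=0,\dots,n$.

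Writing $m=\min_i a_i$ and $S=\{i:a_i=m\}$, the statement $0\in\mathcal{T}(g)$ for a univariate $g$ means precisely that the least coefficient of $g$ is attained at least twice. Thus the reduced conditions read: $|S|\ge 2$, and for every $c$ the surviving coefficients $\{a_i:i\ne c\}$ still attain their minimum at least twice. I would then show these are equivalent to $|S|\ge 3$. For one direction, if $|S|\ge 3$ then deleting any single index $c$ leaves at least two indices of $S$ with value $m$, and since deletion cannot lower the minimum this gives a double minimum; so all conditions hold. Conversely, if $S=\{j,k\}$ had exactly two elements, choosing $c=j$ leaves $a_k=m$ as the unique minimum among the surviving coefficients, whence $0\notin\mathcal{T}(\partial f/\partial(x-j))$, a contradiction. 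This establishes the stated characterization.

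Finally I would organise the set $\{(a_0,\dots,a_n):\ \min_i a_i\text{ is attained at }\ge 3\text{ indices}\}$ as a fan. For each triple $\{j,k,l\}$ the locus $C_{\{j,k,l\}}=\{a:\ a_j=a_k=a_l\le a_i\ \forall i\}$ is a rational polyhedral cone cut out by homogeneous linear equalities and inequalities, and $a$ lies in the relative interior of $C_{\{j,k,l\}}$ exactly when its minimum is attained at precisely the indices $j,k,l$. Points where the minimum is attained more often lie in the common faces $C_{\{j,k,l\}}\cap C_{\{j',k',l'\}}=C_{\{j,k,l\}\cup\{j',k',l'\}}$, so the cones glue along faces into a fan whose maximal cones are exactly these $\binom{n+1}{3}$ cones. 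Each $C_{\{j,k,l\}}$ has the $n-2$ free coordinates $a_i$ with $i\notin\{j,k,l\}$ together with the common value $m$ as parameters, hence dimension $n-1$ in the coefficient space $\T^{n+1}$, i.e.\ codimension $2$; since every maximal cone contains the rescaling line $\langle(1,\dots,1)\rangle$, passing to the $n$-dimensional quotient $\T^n$ (rescaling fixes the tropical roots) preserves this, yielding a fan of pure codimension $2$. One may further recognise this fan as the Bergman fan of the uniform matroid $U_{n-1,n+1}$, which is why it is a tropical linear space.

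The main obstacle I anticipate is the reduction carried out in the first two paragraphs: making precise that in characteristic $0$ only the finitely many derivatives $\partial f/\partial(x-c)$ are relevant, and then handling the combinatorial equivalence cleanly---in particular the boundary behaviour of deleting a minimal versus a non-minimal monomial---so that the threshold ``attained at least three times'' emerges exactly rather than two or four. Once the characterization is in hand, the fan axioms, purity, and the count $\binom{n+1}{3}$ are routine bookkeeping.
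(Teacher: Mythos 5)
Your argument is correct, and the paper in fact states this theorem without any proof; what you supply is exactly the strategy the paper uses for its positive-characteristic analogues, namely reducing the quantifier over all $L$ to the finitely many Euler derivatives $\partial f/\partial(x-c)$ (as in Corollary~\ref{cor:solo_necesito_p}) and then reading off the combinatorial condition on where the minimum is attained, as in the proofs for $\plano{2}{n}$ and Theorem~\ref{teo:clasifica_tipos_maximales}. So this is essentially the paper's approach, carried out in the characteristic-zero case where it was omitted.
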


\subsection{The case $p=2$}

\begin{theorem}
A tropical polynomial $f$ is in $\plano{2}{n}$ if and only if the minimum 
coefficient in the even monomials of $f$ is attained twice and in the odd 
monomials is attained also twice. Hence, if $f$ is of degree $2n-1$, $n\geq 2$ 
then there are $\binom{n}{2}^2$ maximal cones.
\end{theorem}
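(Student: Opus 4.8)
The plan is to invoke Corollary~\ref{cor:solo_necesito_p} with $p=2$, which reduces membership of $f=\bigoplus_{i=0}^n a_i x^i$ in $\plano{2}{n}$ to three conditions at the point $0$: that $0\in\mathcal{T}(f)$ and that $0\in\mathcal{T}(\partial f/\partial(x-i))$ for $i=0,1$. First I would compute these two Euler derivatives explicitly using the geometric description in the Remark. In characteristic $2$, the derivative $\partial f/\partial x$ (the case $L=x$, hyperplane $\{x=0\}$) kills every monomial whose exponent lies at even lattice distance from $0$, that is, every even exponent, leaving $\partial f/\partial x=\bigoplus_{i\text{ odd}}a_ix^i$; likewise $\partial f/\partial(x-1)$ kills the exponents at even lattice distance from $1$, namely the odd ones, leaving $\bigoplus_{i\text{ even}}a_ix^i$. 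In both cases the surviving coefficients are unchanged, since for a surviving exponent $L(i)$ is a nonzero element of the prime field $\mathbb{F}_2\subseteq\K$, so $v(L(i))=0$.

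Next I would translate the tropical-root conditions at $0$. For any univariate tropical polynomial $g=\bigoplus_j b_jx^j$ we have $g(0)=\min_j b_j$, so $0\in\mathcal{T}(g)$ exactly when this minimum is attained at two or more distinct exponents. Applied to the two derivatives, $0\in\mathcal{T}(\partial f/\partial x)$ says that the minimum of the odd-indexed coefficients is attained (at least) twice, and $0\in\mathcal{T}(\partial f/\partial(x-1))$ says that the minimum of the even-indexed coefficients is attained twice. I would then observe that the third condition $0\in\mathcal{T}(f)$ is automatic: the global minimum $\min_i a_i$ equals the smaller of the even-minimum and the odd-minimum, and that one is already attained twice. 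This gives precisely the stated characterization.

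For the cone count I would specialize to degree $2n-1$, where the even exponents $0,2,\ldots,2n-2$ and the odd exponents $1,3,\ldots,2n-1$ each number $n$. The two conditions decouple, one involving only the $n$ even coefficients and the other only the $n$ disjoint odd coefficients, so $\plano{2}{n}$ is the product of two copies of the fan $\{c\in\R^n : \min_j c_j\text{ is attained at least twice}\}$. Each factor is a codimension-$1$ fan whose maximal cones are $\{c : c_j=c_k\leq c_l\text{ for all }l\}$, one for each unordered pair $\{j,k\}$, giving $\binom{n}{2}$ maximal cones. Since maximal cones of a product fan are products of maximal cones, $\plano{2}{n}$ has $\binom{n}{2}^2$ maximal cones, each of codimension $2$ in the coefficient space $\R^{2n}$, in agreement with the codimension found in characteristic $0$.

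The main obstacle is making the bookkeeping of the Euler derivatives airtight: one must confirm both that exactly the claimed monomials survive and that their coefficients are genuinely unaltered (using that residue characteristic $2$ forces $v(L(i))\in\{0,\infty\}$), and then verify that the product description really captures the maximal cells with no further subdivision or identification, so that each pair-of-pairs $(\{j,k\},\{l,m\})$ indexes a distinct full-dimensional cone and conversely. Establishing that the fan structure is exactly this product, rather than merely set-theoretic equality, is where I would spend the most care.
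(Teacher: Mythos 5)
Your proposal is correct and follows essentially the same route as the paper: apply Corollary~\ref{cor:solo_necesito_p} with $p=2$, identify $\frac{\partial f}{\partial x}$ and $\frac{\partial f}{\partial (x-1)}$ as the odd-monomial and even-monomial parts of $f$ with unchanged coefficients, and count maximal cones as pairs of pairs, one from each of the $n$ even and $n$ odd exponents. Your additional observations (that $0\in\mathcal{T}(f)$ is automatic and that the fan is a product of two codimension-$1$ fans) are correct refinements that the paper leaves implicit.
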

\begin{proof}
By Corollary~\ref{cor:solo_necesito_p} a tropical polynomial $f$ is singular at 
$0$ if 
and only if $f_0=\frac{\partial f}{\partial x}$ and $f_1=\frac{\partial 
f}{\partial x-1}$ have $0$ as tropical root. $f_0$ is the polynomial consisting 
on 
all odd monomials of $f$ and $f_1$ is the polynomial consisting on all even 
monomials of $f$.
\[f_0=\bigoplus_{i=0}^{\lfloor \frac{n-1}{2}\rfloor} a_{2i+1}x^{2i+1}
\quad f_1=\bigoplus_{i=0}^{\lfloor \frac{n}{2}\rfloor} a_{2i}x^{2i}\]
A maximal cone is characterized by the two monomials $\{i,j\}$ where $f_0$ 
attains its minimum and the two monomials $\{k,l\}$ where $f_1$ attains its 
minimum. Hence, in a polynomial of degree $2n-1$ there are $\binom{n}{2}^2$ 
maximal cells.
\end{proof}

\begin{theorem}
The tropical discriminant $\disc{2}{2n-1}$ in characteristic $2$ of a 
polynomial 
of degree $2n-1$ is in natural correspondence with the tropical resultant of 
two 
polynomials of degree $n-1$ in characteristic zero. In particular, the Newton 
polytope of $\disc{2}{2n-1}$ has $\binom{2n-2}{n-1}$ extremal vertices.
\end{theorem}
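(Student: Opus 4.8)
The plan is to reduce the characteristic-$2$ discriminant to a classical resultant by splitting a polynomial into its even- and odd-degree parts, exactly the splitting already produced by the two Euler derivatives $f_1=\frac{\partial f}{\partial x-1}$ and $f_0=\frac{\partial f}{\partial x}$ in the previous theorem. On the algebraic side I would write the counterpart $F=\sum_{i=0}^{2n-1}A_iX^i$ as $F(X)=G(X^2)+XH(X^2)$, where $G(Y)=\sum_{j=0}^{n-1}A_{2j}Y^j$ and $H(Y)=\sum_{j=0}^{n-1}A_{2j+1}Y^j$ collect the even and odd coefficients and are generically of degree $n-1$ (indeed $\deg H=n-1$ since $A_{2n-1}\neq0$). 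In characteristic $2$ the derivative of the even part vanishes and every factor $2$ is killed, so $F'(X)=H(X^2)$. Hence $C\in\K^*$ is a double root of $F$ precisely when $G(C^2)+CH(C^2)=0$ and $H(C^2)=0$, which forces $G(C^2)=H(C^2)=0$; and conversely, since $\K$ is algebraically closed of characteristic $2$ the Frobenius is bijective, so any common root $\gamma$ of $G$ and $H$ has a square root $C$ with $C^2=\gamma$ giving back a double root of $F$. Thus $F$ lies in the discriminant hypersurface if and only if $\mathrm{Res}(G,H)=0$.

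Next I would promote this set-theoretic coincidence to an identity of defining polynomials. Both the $A$-discriminant $\tilde{\Delta}_{2,A}$, which is irreducible (being parametrizable, as recorded in the proof of Lemma~\ref{lem:is_a_fan}), and the Sylvester resultant $\mathrm{Res}(G,H)$ are reduced irreducible hypersurfaces, and the even/odd relabelling $a_{2j}\mapsto u_j$, $a_{2j+1}\mapsto v_j$ is a linear isomorphism of coefficient spaces $\T^{2n}\cong\T^{n}\times\T^{n}$ identifying the two loci. Two reduced irreducible hypersurfaces that cut out the same set under a linear isomorphism have defining polynomials agreeing up to a nonzero scalar, so $\mathrm{Disc}(F)$ and $\mathrm{Res}(G,H)$ are identified up to scaling. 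Tropicalizing, and noting that the substitution $C\mapsto C^2$ merely rescales the root coordinate by $2$ on valuations and leaves the coefficient space untouched, the fan $\disc{2}{2n-1}$ is carried isomorphically onto the tropical resultant of two degree-$(n-1)$ polynomials in characteristic zero. This is the asserted natural correspondence.

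The vertex count is then immediate from the classical structure of the resultant polytope. Since $\mathrm{Newt}(\disc{2}{2n-1})$ is linearly isomorphic to $\mathrm{Newt}(\mathrm{Res}(G,H))$, I would invoke the Gelfand--Kapranov--Zelevinsky computation that the Newton polytope of the resultant of two univariate polynomials of degrees $d_1$ and $d_2$ has exactly $\binom{d_1+d_2}{d_1}$ extremal vertices, corresponding to the extreme terms of the Sylvester determinant. With $d_1=d_2=n-1$ this produces $\binom{2n-2}{n-1}$ vertices, as claimed.

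The step I expect to be the main obstacle is the second one: upgrading ``double root of $F\Leftrightarrow$ common root of $G,H$'' to an equality of Newton polytopes. One must check that boundary phenomena --- common roots at $0$ or at infinity, i.e. vanishing of the extreme coefficients $A_0,A_1$ or $A_{2n-2},A_{2n-1}$ --- do not change the closures of the two hypersurfaces, and that the resultant taken in the formal degree $n-1$ (rather than in a smaller degree) is the correct object. Both points reduce to the irreducibility already available from Lemma~\ref{lem:is_a_fan} together with the irreducibility of the generic Sylvester resultant.
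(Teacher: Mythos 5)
Your proposal is correct in substance and rests on the same even/odd splitting as the paper's argument, but you carry it out on the classical side whereas the paper stays entirely tropical. The paper's proof is a one-step consequence of the preceding theorem: the criterion for $\plano{2}{2n-1}$, translated from the root $0$ to an arbitrary root $a$, says that $f\in\disc{2}{2n-1}$ exactly when the odd part and the even part of $f$ share a tropical root, equivalently when $g_0(y)=\oplus_i a_{2i+1}y^{i}$ and $g_1(y)=\oplus_i a_{2i}y^{i}$ do; that locus is the tropical resultant of two degree-$(n-1)$ polynomials, and the cited characteristic-independence of tropical resultants finishes the identification. You instead prove the classical identity: in characteristic $2$, $F=G(X^2)+XH(X^2)$ has $F'=H(X^2)$, so (Frobenius being bijective) $F$ has a double root in the torus iff $G$ and $H$ have a common root, and irreducibility of both hypersurfaces upgrades this to an equality of defining polynomials up to a scalar. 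Your route is heavier but buys more: an identity of the discriminant and the resultant as polynomials over a field of characteristic $2$, hence of their Newton polytopes, not merely of their tropicalizations; the boundary issues you flag at the end are indeed disposed of exactly as you say, by irreducibility.

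The one step you should make explicit is the passage from the characteristic-$2$ resultant to the characteristic-$0$ one. After your identification, $\mathrm{Newt}(\disc{2}{2n-1})$ equals the Newton polytope of $\mathrm{Res}(G,H)$ computed over a field of characteristic $2$, while the Gelfand--Kapranov--Zelevinsky vertex count $\binom{2n-2}{n-1}$ is a characteristic-zero statement. You need the fact that the Newton polytope of the resultant (equivalently, the tropical resultant) does not depend on the characteristic; this holds because the extreme monomials of the integer Sylvester resultant have coefficients $\pm1$ and therefore survive reduction mod $2$, and it is precisely the fact the paper cites from Sturmfels and Tabera. With that citation added, your argument is complete.
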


These two polynomials have a common tropical root if and only if
\[g_1 = \bigoplus_{i=0}^{\lfloor \frac{n}{2}\rfloor} a_{2i}y^{i},\quad 
g_0=\bigoplus_{i=0}^{\lfloor \frac{n-1}{2}\rfloor} a_{2i+1}y^{i}\]
have a common root. Hence, the polynomials in the tropical discriminant 
$\discn{2}$ is in bijective correspondence to the tropical resultant of two 
univariate polynomials of degree $\lfloor \frac{n}{2}\rfloor$ and $\lfloor 
\frac{n-1}{2}\rfloor$. Since the tropical resultant of two univariate 
polynomials does not depend on the characteristic 
\cite{Sturmfels-polytope_resultant, 
Resultantes-trop}, then 
the tropical discriminant in characteristic two is in natural bijective 
correspondence with the resultant of two polynomials of degree $\lfloor 
\frac{n}{2}\rfloor$ and $\lfloor \frac{n-1}{2}\rfloor$.

\subsection{The case $p>2$}
Let us study now the case of an odd primes $p$. By 
Corollary~\ref{cor:solo_necesito_p}, a polynomial $f$ is in $\planopn$ if 
and 
only if the minimum of the coefficients of $\frac{\partial f}{\partial x=i}$ is 
attained twice for $i=0,\ldots, p-1$. Let us now describe the combinatorial 
types of maximal cells in $\planopn$.

\begin{theorem}\label{teo:clasifica_tipos_maximales}
The set of maximal cells in $\planopn$ is exactly the following:
\begin{enumerate}
\item Type \I. Tropical polynomials where the minimum is attained at three 
different 
monomials $i,j,k$ that produce three different residues $\mod p$. Denote them 
by $\{i,j,k\}$
\item Type \II. Tropical polynomials where the minimum is attained at $i,j$ 
with the same residue $\mod p$. And, if we eliminate the monomials $r$, 
$r\equiv i\mod p$, 
then the minimum is attained in $k,l$ con $k\not\equiv l\mod p$. Denote them by 
$[\{i,j\},\{k,l\}]$
\item Type \III. Tropical polynomials where the minimum is attained at $i,j$ 
with the same class $\mod p$. And, if we eliminate the monomials $r$, $r\equiv 
i\mod p$, then 
the minimum is attained in $k,l$ with $k\equiv l\mod p$. Denote them by 
$\{\{i,j\},\{k,l\}\}$.
\end{enumerate}
\end{theorem}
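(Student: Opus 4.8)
The plan is to convert membership in $\planopn$ into a single combinatorial condition on the coefficient vector and then extract the maximal cones by minimizing the number of equalities needed to meet that condition. By Corollary~\ref{cor:solo_necesito_p}, $f=\bigoplus_{i=0}^n a_ix^i$ lies in $\planopn$ precisely when $0\in\mathcal{T}(\frac{\partial f}{\partial x-c})$ for every $c\in\{0,\ldots,p-1\}$, and by the characteristic-$p$ description of the Euler derivative (the Remark) the polynomial $\frac{\partial f}{\partial x-c}=\bigoplus_{j\not\equiv c} a_jx^j$ keeps exactly the monomials whose exponent is not congruent to $c$ modulo $p$. Hence the criterion reads: for each residue $c$, the minimum of $\{a_j:\ j\not\equiv c\mod p\}$ is attained at least twice. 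I would organize everything around the partition of $\{0,\ldots,n\}$ into residue classes, writing $M$ for the global minimum of the $a_j$ and $R$ for the set of residues attaining $M$.

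Because Lemma~\ref{lem:is_a_fan} places us in the constant-coefficient situation, every cell is a cone cut out by homogeneous linear equalities and inequalities in the $a_j$, and a cone is maximal exactly when it uses the fewest independent equalities. So I would first show that neither zero nor one equality can satisfy the criterion: if all $a_j$ are distinct the global minimum is unique, and deleting a class that still contains the minimizing monomial leaves a unique minimum; and a single equality can only help if it ties the two global minimizers, in which case deleting a class that removes exactly one of them (or their common class, when they are congruent) again destroys the double minimum. Thus every maximal cone uses at least two independent equalities, i.e.\ has codimension at least $2$, and I will exhibit cones with exactly two.

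Next I would run the case analysis on $|R|$, which produces exactly the three types. When $|R|\ge 3$ the criterion holds for free, and the generic codimension-$2$ configuration has $M$ attained at exactly three monomials in three distinct residue classes, which is Type~\I; anything with more coincidences is a proper face. When $|R|=1$, say $R=\{c_0\}$, deleting a class $c\ne c_0$ forces two monomials of class $c_0$ at value $M$, while deleting $c_0$ forces the second value $M'=\min_{j\not\equiv c_0}a_j$ to be attained twice; generically this yields exactly two monomials $i\equiv j\equiv c_0$ at $M$ and exactly two monomials $k,l$ at $M'$, giving Type~\II\ when $k\not\equiv l$ and Type~\III\ when $k\equiv l$. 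When $|R|=2$ the deletions of the two classes of $R$ each force an extra coincidence, so such a cone has codimension at least $3$ and is never maximal.

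The step I expect to be the most delicate is the converse verification that each listed type really satisfies the criterion for all $p$ residues simultaneously, not just for the residue used in its definition. This reduces to checking, for every type and every deleted class $c$, that a doubly-attained minimum survives: for Type~\I\ at least two of $i,j,k$ survive any single deletion because their residues are distinct; for Types~\II\ and~\III\ the pair $\{i,j\}$ survives the deletion of every class other than $c_0$, while the deletion of $c_0$ is absorbed by the pair $\{k,l\}$ (here one uses $k,l\not\equiv c_0$). Assembling these checks, together with the codimension bound above, shows the maximal cones are exactly those of Types~\I, \II, and~\III.
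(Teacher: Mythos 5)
Your overall strategy is the one the paper uses: reduce membership in $\planopn$ to the condition that each of the $p$ class-deletions leaves a twice-attained minimum (via Corollary~\ref{cor:solo_necesito_p}), verify that the three listed types satisfy this and have codimension $2$, and then classify an arbitrary element of $\planopn$ by the set of residue classes of the monomials attaining the global minimum. Your cases $|R|\ge 3$ and $|R|=1$ match the paper's treatment of those cases essentially verbatim.

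There is, however, one genuine gap, in the $|R|=2$ case. You dispose of it by saying such a cone ``has codimension at least $3$ and is never maximal,'' and more generally you assert that ``a cone is maximal exactly when it uses the fewest independent equalities.'' That inference is not valid for a fan that is not known in advance to be pure: a maximal cell of a fan is one that is not a proper face of another cell, and a fan can perfectly well have maximal cells of different codimensions --- indeed Theorem~\ref{teo:univ_singular} of this very paper exhibits a non-pure fan of exactly this kind. Since the theorem you are proving is precisely the identification of the maximal cells, you cannot assume purity; you must show that every $|R|=2$ configuration actually lies in the closure of one of the three listed cone types. The paper does this explicitly: when the global minimum is attained in exactly two classes $[r]\ne[s]$, the deletion of class $[r]$ forces two minimizers in class $[s]$ and vice versa, so there are monomials $i,j$ with $[i]=[j]=[r]$ and $k,l$ with $[k]=[l]=[s]$ all at the minimum value, and such a polynomial lies in the face $a_i=a_j=a_k=a_l$ of the type \III\ cone $\{\{i,j\},\{k,l\}\}$ --- here one needs Lemma~\ref{lem:type_III_swap}, which says the type \III\ cone is the full locus $a_i=a_j,\ a_k=a_l$ and not merely one of the two orderings. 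The ingredients for this are already implicit in your own sentence about ``each deletion forcing an extra coincidence,'' so the fix is short, but as written the step ``codimension $\ge 3$ hence not maximal'' is a non sequitur and leaves open the possibility of additional maximal cells of higher codimension.
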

\begin{proof}
It is clear that any polynomial that satisfies the description of the cells is 
in $\planopn$. For type \I polynomials and any Euler derivative, there is 
at 
least two monomials in $\{i,j,k\}$ where the minimum is attained. For type \II 
and \III. For any Euler derivative $x=s$, with $s\neq r$ the minimum is 
attained at $\{i,j\}$, while for $\frac{\partial f}{\partial x-r}$ the minimum 
is attained at $\{k,l\}$. We are also describing cells of codimension $2$, so 
we are dealing with maximal cells.

Let us now check that there is no other possibility. Let $f\in\planopn$, we 
will show that it must belong to the closure of one of these cells. Let $L_0$ 
be the set of monomials where the minimum is attained. Consider $L_0\mod 
p=\{[a]| a\in L_0\}$ the set of classes. If there are at least three different 
classes $[i],[j],[k]$ in $L_0\mod p$, then $f$ is in the closure of the type \I 
cell $\{i,j,k\}$. If $L_0\mod p = \{[r]\}$ is a single class. Then $L_0$ 
contains at least two monomials $\{i,j\}$. Now, since $f\in\planopn$, the 
minimum of the coefficients of $\frac{\partial f}{\partial x-r}$ has two be 
attained at least for two monomials $k,l$ hence $f$ is in the closure of a type 
\II or \III cell. Depending if $k\equiv l\mod p$ or not. Finally, it may happen 
that $L_0$ consists on exactly two different classes $\{[r],[s]\}$. Since the 
minimum of $\frac{\partial f}{\partial x-r}$ and $\frac{\partial f}{\partial 
x-s}$ is attained at least twice, there must be at least two monomials on each 
class $i,j\in L_0$ $[i]=[j]=[r]$ and $k,l\in L_0$, $[k]=[l]=[s]$. Thus, in this 
case $f$ is in the closure of a type \III cell.

Finally, we have to check that we are not counting twice a maximal cell.
\end{proof}

We now check that the notation of type \III cells is well chosen.

\begin{lemma}\label{lem:type_III_swap}
Let $[i]=[j]\neq [k]=[l]\mod p$. Then the cones $a_i=a_j\leq a_k=a_l$ and 
$a_k=a_l\leq a_i=a_j$ belong to the same maximal cone of type \III, 
$\{\{i,j\},\{k,l\}\}$.
\end{lemma}
\begin{proof}
The two cones meet in the common face $D=\{a_i=a_j=a_k=a_l\}$. In 
characteristic zero, this cone is adjacent to four maximal cones of 
$\plano{0}{n}$, namely $\{i,j,k\}$, $\{i,j,l\}$, $\{i,k,l\}$, $\{j,k,l\}$. 
However, none of these cones belong to $\planopn$, since $[i]=[j]\neq 
[k]=[l]\mod p$. The only perturbations of $D$ that still belong to $\planopn$ 
are precisely the cones $a_i=a_j\leq a_k=a_l$ and $a_k=a_l\leq a_i=a_j$. It 
follows that the maximal cone of $\planopn$ containing $D$ is $a_i=a_j$, 
$a_k=a_l$, that is the definition of the cone $\{\{i,j\},\{k,l\}\}$.
\end{proof}

\begin{remark}
The only cells in common of $\planopn$ and $\plano{0}{n}$ are type \I 
cells. The reason for separating type \II and type \III cells is that if 
$[\{i,j\},\{k,l\}]$ is a type \II cell, then the cone $a_k=a_l<a_i=a_j$ is not 
in $\planopn$. Moreover, in the proof of Theorem~\ref{teo:incidencias} we will 
see that if $\{\{i,j\},\{k,l\}\}$ the set of polynomials $a_i=a_j<a_k=a_l$ and 
$a_k=a_l<a_i=a_j$ (and the rest of monomial higher) belong to a maximal cone of 
$\planopn$.
\end{remark}

\begin{theorem}\label{teo:conteo_de_plano}
In $\plano{p}{pn-1}$ in characteristic $p>2$ there are:
\begin{enumerate}
 \item $\binom{p}{3}n^3$ facets of type \I
 \item $pn^2\binom{n}{2}\binom{p-1}{2}$ facets of type \II
 \item $\binom{p}{2}\binom{n}{2}^2$ facets of type \III
\end{enumerate}
Thus, for fixed $p$ the number of facets in $\plano{p}{m}$ is 
$\mathcal{O}(m^4)$.
\end{theorem}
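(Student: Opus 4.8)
The plan is to count each of the three types of maximal cones separately, using the characterizations from Theorem~\ref{teo:clasifica_tipos_maximales}, and then to verify that no cone is counted twice. The key parameters are the prime $p$ and the degree $m=pn-1$, so the monomials are $x^0,\ldots,x^{pn-1}$, which means there are exactly $n$ monomials in each of the $p$ residue classes $\mod p$ (the class $[r]$ is represented by $r,r+p,\ldots,r+(n-1)p$).

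For type \I cones, marked by $\{i,j,k\}$ with three distinct residues, I would first choose the three distinct residue classes among the $p$ available, giving $\binom{p}{3}$ choices, and then independently choose one monomial from each chosen class, giving $n^3$ choices; this yields $\binom{p}{3}n^3$. For type \II cones $[\{i,j\},\{k,l\}]$ the first pair $\{i,j\}$ lies in a single residue class, while the second pair $\{k,l\}$ must have distinct residues both different from the common residue of $i,j$: I would choose the residue $[r]$ for the first pair ($p$ ways), choose the two monomials $i,j$ within that class ($\binom{n}{2}$ ways), choose two distinct residues among the remaining $p-1$ classes for $k$ and $l$ ($\binom{p-1}{2}$ ways), and one monomial from each ($n^2$ ways), giving $pn^2\binom{n}{2}\binom{p-1}{2}$. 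For type \III cones $\{\{i,j\},\{k,l\}\}$ both pairs lie in a single class but in two \emph{different} classes, and the cone is unordered in the two pairs by Lemma~\ref{lem:type_III_swap}: I would choose the two residue classes ($\binom{p}{2}$ ways, already unordered) and then two monomials from each class ($\binom{n}{2}$ ways each), giving $\binom{p}{2}\binom{n}{2}^2$.

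The main obstacle is the last sentence of the proof of Theorem~\ref{teo:clasifica_tipos_maximales}, namely verifying that the three families are genuinely disjoint and that within each family distinct markings give distinct cones. Disjointness across types is immediate from the residue-class patterns (one class versus two versus three), and the potential double-counting inside type \III is exactly what Lemma~\ref{lem:type_III_swap} resolves: the two orderings of the pairs give the same maximal cone, which is why $\binom{p}{2}$ (unordered) rather than an ordered count is correct. I must also confirm that in type \II no symmetry collapses distinct markings, which follows because the roles of the two pairs are asymmetric: $\{i,j\}$ is where the global minimum sits, whereas $\{k,l\}$ only emerges after deleting the class $[i]$, so the marking is rigid.

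Finally, for the asymptotic statement I would treat $p$ as fixed so that $\binom{p}{3}$, $\binom{p-1}{2}$ and $\binom{p}{2}$ are constants, and observe that each count is a polynomial in $n$ of degree $4$: type \I contributes $\mathcal{O}(n^3)$, while types \II and \III each contribute a term of order $n^2\cdot\binom{n}{2}=\mathcal{O}(n^4)$. Since $m=pn-1$ is linear in $n$, the total number of facets is $\mathcal{O}(m^4)$, as claimed.
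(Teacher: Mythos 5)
Your proposal is correct and follows essentially the same route as the paper: exploit that degree $pn-1$ gives exactly $n$ monomials per residue class, then count each type by first choosing residue classes and then monomials within them, arriving at the same three formulas. The only difference is that you make explicit the disjointness of the three families and the role of Lemma~\ref{lem:type_III_swap} in justifying the unordered count $\binom{p}{2}$ for type \III, points the paper's proof leaves implicit.
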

\begin{proof}
If the polynomial is of degree $pn-1$ there are exactly $n$ monomials on each 
class $\mod p$. For type \I facets, we have to choose three classes 
$[i],[j],[k]$ and, on each class one monomial. For type \II facets 
$[\{i,j\},\{k,l\}]$ we first choose the class of $[i],[j]$ and then the two 
monomials. Now, we have to choose the classes of $[k]$, $[l]$ and one element 
on each class. Finally, for type \III facets, $\{\{i,j\},\{k,l\}\}$ we have 
just to choose the two classes $[i]$, $[k]$ and, on each class. Two monomials.
\end{proof}

We now describe the incidence of two maximal cones of $\discn{p}$. If $C_1$ and 
$C_2$ are two different maximal cones of $\discn{p}$ and the common face 
$D=C_1\cap C_2$ is of dimension $n-1$ in $\R^{n+1}$, then any polynomial $f$ 
will have two double roots $a_1$, $a_2$. The specific values of $a_1$ and $a_2$ 
depend on $f\in D$, but not the fact that $a_1=a_2$ or $a_1\neq a_2$.

\begin{theorem}\label{teo:incidencias}
Let $p>2$ be a prime. Let $C_1$, $C_2$ be two maximal cones of $\discn{p}$ 
meeting on a cone dimension $n-1$, $D=C_1\cap C_2$. Let $a_1$, $a_2$ the two 
tropical double roots of a generic polynomial $f\in D$. Then
\begin{enumerate}
 \item If $a_1\neq a_2$ then $C_1$ and $C_2$ can be of any type, $D$ is a face 
of exactly 4 maximal cones of $\discn{p}$.
 \item If $a_1=a_2$ then, up to order:
\begin{enumerate}
\item The only obstruction for the pair of types of $C_1$ and $C_2$ is \I-\III.
\item If both $C_1=\{i,j,k\}$ and $C_2=\{i,j,l\}$ are of type \I and the classes
$[i],[j],[k],[l]$ are pairwise different, then $D$ is adjacent to 4 maximal 
cones of 
$\discn{p}$. Otherwise $D$ is adjacent to exactly 3 maximal cones of 
$\discn{p}$.
\end{enumerate}
\end{enumerate}
\end{theorem}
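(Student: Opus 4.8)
The plan is to analyze the local geometry of the polyhedral fan $\discn{p}$ near the codimension-$2$ cone $D$ by understanding which maximal cones can degenerate to $D$. Since $D$ has dimension $n-1$ in $\R^{n+1}$, a generic $f\in D$ has exactly two tropical double roots $a_1,a_2$ (two codimension-$1$ conditions beyond the generic polynomial), and the whole classification reduces to a local question: which admissible marked cones of $\planopn$-type (after the rescaling $f(cx)$ that passes from $\planopn$ to $\discn{p}$) contain $D$ in their closure. First I would dispose of case (1). If $a_1\neq a_2$, the two double roots are combinatorially independent: near $D$ the two roots move in disjoint monomial supports, so perturbing $f$ off $D$ amounts to independently choosing, for each of the two roots, which of its two adjacent maximal configurations we land in. This gives $2\times 2=4$ maximal cones, and the types are unconstrained because each double root is governed by its own local data, exactly as in the characteristic-zero picture.

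The substance is in case (2), where $a_1=a_2=:a$. After translating so that $a=0$, the relevant local object is $\planopn$ itself, and I would invoke Theorem~\ref{teo:clasifica_tipos_maximales} together with Lemma~\ref{lem:type_III_swap}. The cone $D$ being codimension $n-1$ means that at $f\in D$ the minimum is attained at one more monomial than generically required, so $D$ sits in the common boundary of several maximal cells of $\planopn$. I would enumerate the possible enlarged minimum-attaining configurations and, for each, list exactly which Type \I/\II/\III cells of Theorem~\ref{teo:clasifica_tipos_maximales} have $D$ in their closure. To establish (2a), the impossibility of a \I--\III adjacency, I would argue that a Type \I cell requires three pairwise-distinct residues among the minimizing monomials, whereas a Type \III cell forces the minimizing monomials to split into exactly two residue classes of size two each; a single cone $D$ in both closures would have to satisfy both residue patterns simultaneously at its generic point, and the residue-class bookkeeping shows these are incompatible. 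The remaining pairs \I--\I, \I--\II, \II--\II, \II--\III, \III--\III must each be realized, which I would confirm by exhibiting the enlarged configuration producing each.

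The counting in (2b) is where I expect the main obstacle, so I would treat it carefully. When two Type \I cells $\{i,j,k\}$ and $\{i,j,l\}$ share $D$, the generic $f\in D$ has minimum attained at $\{i,j,k,l\}$. The key distinction is whether the four residues $[i],[j],[k],[l]\bmod p$ are pairwise distinct. If they are, the common face $D=\{a_i=a_j=a_k=a_l\le \text{rest}\}$ behaves exactly as in characteristic zero: it is adjacent to the four cones $\{i,j,k\},\{i,j,l\},\{i,k,l\},\{j,k,l\}$, all of which are legitimate Type \I cells because all residues differ, giving $4$. If instead some residues coincide—the delicate subcase—then at least one of the four characteristic-zero cones fails the three-distinct-residues test and is expelled from $\planopn$; the compensating cone that replaces it (a Type \II or \III cell, precisely as in the mechanism of Lemma~\ref{lem:type_III_swap} and the preceding Remark) gets identified with a cone already counted, so the local count drops to $3$. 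The hard part will be organizing the residue-coincidence cases so that the replacement cone is correctly identified and one verifies it is not a genuinely new fifth cone but rather coincides with an existing adjacent cone; this is exactly the ``we are not counting twice'' subtlety flagged at the end of the proof of Theorem~\ref{teo:clasifica_tipos_maximales}, and it is what forces the count to be $3$ rather than $4$.
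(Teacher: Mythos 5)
Your overall architecture matches the paper's: reduce to a local analysis at $D$ and count adjacent maximal cones by perturbation, splitting on the types of $C_1,C_2$. But there are two concrete gaps. First, in case (1) your justification rests on the claim that the two double roots ``move in disjoint monomial supports.'' This is false in general: the monomials governing $a_1$ and those governing $a_2$ lie on two edges of the Newton diagram of $f$ with slopes $-a_1$ and $-a_2$, and these edges may share a vertex, so the supports may overlap. The actual content of the paper's argument is to show that each side nevertheless retains a \emph{private} monomial whose coefficient can be raised or lowered without disturbing the other root --- e.g.\ for $C_1=\{i,j,k\}$ of type \I one uses collinearity of $i,j,k$ to find such an $i$, and for two cells of type \II/\III one must rule out $\{i,j\}\subseteq\{r,s,t,u\}$ by a parallelism-plus-residue argument. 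None of this appears in your sketch, and without it the count of $4$ is unsupported.

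Second, your treatment of (2b) only covers the \I--\I pair. The ``otherwise exactly $3$'' clause of the statement also covers the pairs \I--\II, \II--\II (which itself splits into two distinct sub-configurations, $[\{i,j\},\{k,l\}]$ vs.\ $[\{i,j\},\{k,m\}]$ and $[\{i,j\},\{k,l\}]$ vs.\ $[\{i,m\},\{k,l\}]$), \II--\III and \III--\III, and each requires its own enumeration of the third adjacent cone; you do not address any of these counts. Even in the \I--\I case your bookkeeping is off: when $[k]=[l]$, \emph{two} of the four characteristic-zero cones ($\{i,k,l\}$ and $\{j,k,l\}$) fail the distinct-residue test and are replaced by the \emph{single} type \II cone $[\{k,l\},\{i,j\}]$, giving $4-2+1=3$; it is not that one replacement cone ``coincides with an existing adjacent cone.'' On the positive side, your proposed residue-bookkeeping proof of (2a) is a legitimate alternative to the paper's argument, which instead observes that a common face of a type \I and a type \III cell would force at least five monomials to attain the minimum, pushing $D$ to codimension at least $3$; your version would need to be carried out by checking that every codimension-one face of a type \III cell has its minimum set contained in at most two residue classes, hence cannot contain a type \I triple.
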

\begin{proof}
By abuse of notation, if $i\in I$, the support of the polynomial, we will also 
write $i=(i,a_i)$ the corresponding point in the Newton diagram of the 
polynomial. Let $a_1\neq a_2$. We distinguish the following cases.\\
$\bullet$ Both $C_1$ and $C_2$ are of type \I, we are in the same situation as 
in characteristic 0. Since every 2-face of the newton polytope in discriminant 
zero is (combinatorially) a square, $D$ is adjacent to four different maximal 
cones.\\
$\bullet$ If $C_1=\{i,j,k\}$ and $C_2$ is $[\{l,m\},\{r,s\}]$. Without loss of 
generality $i\notin \{l,m,r,s\}$ (because $i,j,k$ are collinear). Also, 
$r\notin \{i,j,k\}$ (because $\{i,j,k\}$ lie on an edge of the Newton polytope 
of $f$ not parallel to $\overline{rs}$). We can either increase or decrease $i$ 
or $o$ to eliminate one of the tropical roots and passing to a maximal cone. 
Hence $S$ is also adjacent to four maximal cells. The same argument holds for 
two cells of type \I, \III.\\
$\bullet$ both $C_1$ and $C_2$ are of type \II or \III. Assume that 
$C_1=[\{i,j\},\{k,l\}]$, $C_2=[\{r,s\},\{t,u\}]$. Note that it is impossible 
that $\{i,j\}\subseteq \{r,s,t,u\}$. First, $\{r,s\}\neq \{i,j\}\neq \{t,u\}$, 
sine $\overline{ij}$ is not parallel to the lines 
$\overline{rs}||\overline{t,u}$. Second, it can not happen that $i\in \{r,s\}$ 
and $j\in \{t,u\}$, because in that case $[r]=[i]=[j]=[t] \mod p$ and this is 
impossible. Hence we may assume that $i\notin \{r,s,t,u\}$ and $r\notin 
\{i,j,k,l\}$ and we still have four possibilities to perturb this configuration 
into maximal cones of $\discn{p}$.

Now, assume that $a_1=a_2$, by dehomogenization, we may assume that 
$a_1=a_2=0$. We also distinguish different cases:\\
$\bullet$ $C_1$ and $C_2$ are of type \I. They are of the form $C_1=\{i,j,k\}$, 
$C_2=\{i,j,t\}$. If $[k]\neq [t]$, then the case is like the characteristic 
zero case, $D$ is adjacent to the four cells of type \I, $\{i,j,k\}$, 
$\{i,j,t\}$, $\{i,k,t\}$, $\{j,k,t\}$. But, if $[k]=[t]$, then $D$ is a face of 
the three maximal cones $\{i,j,k\}$, $\{i,j,t\}$ and $[\{k,t\},\{i,j\}]$. Note 
that $[i]\neq [j]$, so this cell is never of type \III.\\
$\bullet$ Types \I-\II. In this case, the two cells are $\{i,j,k\}$ and 
$[\{k,l\},\{i,j\}]$, so again, $D$ is a face of three maximal cells. 
$\{i,j,k\}$, $\{i,j,l\}$, $[\{k,l\},\{i,j\}]$.\\
$\bullet$ Types \I-\III. If $\{i,j,k\}$, $\{j,r\},\{k,s\}$ meet in $D$, we have 
five monomials with the same value at the tropical root, so $D$ is at least of 
codimension 3 in $\R^n$.\\
$\bullet$ Types \II-\II. Two cells of type \II can meet in two different ways.
\begin{enumerate}
 \item $[\{i,j\},\{k,l\}]$ and $[\{i,j\},\{k,m\}]$. $D$ is a face of these two 
type \II cells and the other may be the cell of type \II or \III 
$[\{i,j\},\{l,m\}]$ or $\{\{i,j\},\{l,m\}\}$ depending if $[l]\neq[m]$ or not.
 \item $[\{i,j\},\{k,l\}]$ and $[\{i,m\},\{k,l\}]$, $D$ is a face of three 
maximal cells. The previous two plus $[\{j,m\},\{k,l\}]$. Note that, in this 
case $[i]=[j]=[m]$, so it is not important if $a_i=a_j<a_m<a_k=a_l$ to decide 
if the polynomial is singular or not, because the only important derivative 
$\frac{\partial f}{\partial x-i}$ erases monomials $i,j,m$.
\end{enumerate}
$\bullet$ Types \II-\III. The two cells are of the form $[\{i,j\},\{k,l\}]$ 
and 
$\{\{i,j\},\{l,m\}\}$. This is one of the cases studied above. The codimension 
one cell is adjacent to these two facets plus $[\{i,j\},\{k,m\}]$.\\
$\bullet$ Type \III-\III. Two different cells of type \III meet, they must be 
of one of the following cases:
\begin{enumerate}
\item $\{\{i,j\},\{k,l\}\}$ and $\{\{i,j\},\{k,m\}\}$ so there is again only 
another facet adjacent to this cell, $\{\{i,j\},\{l,m\}\}$.
\item $\{\{i,j\},\{k,l\}\}$ and $\{\{i,j\},\{n,o\}\}$, with $[k]\neq [o]\neq 
[i]$. The other adjacent cell is $\{\{k,l\},\{n,o\}\}$
\end{enumerate}
\end{proof}

\begin{corollary}\label{cor:tria_y_cuad}
Let $p>2$ be a prime. If $n<p$ then $\discn{p}=\disc{0}{n}$. If $n\geq p$ then 
the $2$-faces of the Newton polytope of $\discn{p}$ are quadrangles or 
triangles.
\end{corollary}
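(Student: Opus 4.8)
The plan is to handle the two assertions separately: the first by comparing Euler derivatives, the second by feeding the incidence count of Theorem~\ref{teo:incidencias} into the duality between the tropical discriminant and its Newton polytope.

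For the equality $\discn{p}=\disc{0}{n}$ when $n<p$, I would first reduce it to the equality of linear spaces $\plano{p}{n}=\plano{0}{n}$, since in both characteristics the discriminant is recovered as $\bigcup_{c\in\T}\{f(cx)\mid f\in\plano{p}{n}\}$. By Corollary~\ref{cor:solo_necesito_p}, a polynomial lies in $\plano{p}{n}$ exactly when $0\in\mathcal{T}(\partial f/\partial(x-i))$ for $i=0,\ldots,p-1$. The point is that when $n<p$ no two distinct exponents in $\{0,\ldots,n\}$ are congruent modulo $p$, so $\partial f/\partial(x-i)$ deletes at most the single monomial $i$: for $i\le n$ this coincides with the characteristic-zero Euler derivative, and for $n<i\le p-1$ nothing is deleted, so $\partial f/\partial(x-i)=f$ and the condition $0\in\mathcal{T}(\partial f/\partial(x-i))$ holds automatically because $0$ is already a tropical root of $f$. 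Thus the defining conditions collapse to the characteristic-zero ones, giving the claimed equality.

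For the shape of the $2$-faces when $n\ge p$, I would use the combinatorial duality recalled in Section~\ref{sec:singularidades}. By Lemma~\ref{lem:is_a_fan} we are in the constant-coefficient situation, so $\discn{p}$ is the codimension-one skeleton of the normal fan of $N_{p,n}$; under this dictionary the maximal cones of $\discn{p}$ are dual to the edges of $N_{p,n}$ and a codimension-one face $D=C_1\cap C_2$ is dual to a $2$-face $F$ of $N_{p,n}$. Reversing inclusions, the maximal cones of $\discn{p}$ containing $D$ correspond precisely to the edges of $F$, so $F$ is a $k$-gon if and only if $D$ is contained in exactly $k$ maximal cones. The proof then amounts to reading the incidence count off Theorem~\ref{teo:incidencias}: for $p>2$, every codimension-one face $D$ is contained in either $3$ or $4$ maximal cones, so every $2$-face $F$ of $N_{p,n}$ has $3$ or $4$ edges and is therefore a triangle or a quadrangle. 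Since $n\ge p>2$ forces $\dim N_{p,n}=n-1\ge 2$, there actually are $2$-faces to speak of.

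I expect the main obstacle to lie in the bookkeeping of the duality dictionary rather than in any new geometry: one must check that every codimension-one face of the fan genuinely arises as an intersection of two maximal cones — which follows from the purity asserted in Lemma~\ref{lem:is_a_fan} — and, crucially, that the case analysis in Theorem~\ref{teo:incidencias} is exhaustive over all combinatorial types of adjacency, so that no $2$-face with five or more edges can slip through. Granting this, the corollary is immediate.
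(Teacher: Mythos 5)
Your proof is correct and follows essentially the same route as the paper: for $n<p$ the Euler derivatives $\partial f/\partial(x-i)$ in characteristic $p$ coincide with (or are subsumed by) the characteristic-zero ones, and for $n\ge p$ the $2$-faces of $N_{p,n}$ are dual to the codimension-two cones of $\discn{p}$, which by Theorem~\ref{teo:incidencias} lie in exactly three or four maximal cones. Your extra remarks (the automatic vacuity of the derivatives with $n<i\le p-1$, and the reliance on purity and on the exhaustiveness of the case analysis in Theorem~\ref{teo:incidencias}) are sound and only make explicit what the paper leaves implicit.
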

\begin{proof}
If $n<p$, then the Euler derivative $\frac{\partial f}{\partial x-i}$ is the 
same in characteristics $0$ and $p$, hence $\discn{p}=\discn{p}$. If $n\geq p$ 
the 2-faces of $\discn{p}$ are combinatorially dual to the cones $D$ in 
$\discn{p}$ of codimension 2. By Theorem~\ref{teo:incidencias} $D$ is a face of 
four or three maximal cones. Hence the 2-faces can only be quadrangles and 
triangles.
\end{proof}

\begin{theorem}
The Newton polytope of $\disc{p}{p}$ has $2^{p-1}-1$ vertices, 
$(p-1)(2^{p-2}+p/2-2)$ edges, $\binom{p-1}{2}(2^{p-3}-1)$ quadrangles and 
$\binom{p}{3}$ triangles.
\end{theorem}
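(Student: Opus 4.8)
The plan is to read off the three face numbers of $N_{p,p}$ from the fan $\disc{p}{p}$, which is a rational polyhedral fan by Lemma~\ref{lem:is_a_fan} and is combinatorially dual to $N_{p,p}$. Under this duality the edges of $N_{p,p}$ correspond to the maximal (facet) cones of $\disc{p}{p}$, the $2$-faces correspond to the codimension-$2$ cones $D=C_1\cap C_2$, and the vertices correspond to the full-dimensional regions of the complement of $\disc{p}{p}$. By Corollary~\ref{cor:tria_y_cuad} each $2$-face is a triangle or a quadrangle, and Theorem~\ref{teo:incidencias} tells us exactly which: a codimension-$2$ cone is dual to a triangle when it is adjacent to three maximal cones and to a quadrangle when it is adjacent to four. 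So the whole computation reduces to a careful enumeration of cones of $\disc{p}{p}$, organized by the types \I, \II, \III of Theorem~\ref{teo:clasifica_tipos_maximales}.

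First I would specialize the combinatorics to $n=p$. The support is $\{0,1,\dots,p\}$, and the only residue class mod $p$ containing two monomials is $[0]=\{0,p\}$; every other class is a singleton. Consequently type \III cones, which require two distinct doubled classes, cannot occur, and every type \II cone is forced to have the shape $[\{0,p\},\{k,l\}]$ with $1\le k<l\le p-1$. With type \III eliminated, the case analysis in the proof of Theorem~\ref{teo:incidencias} collapses dramatically: the only codimension-$2$ cones adjacent to three maximal cones (hence dual to triangles) are those coming from a collision of two type \I cones whose differing monomials form the class $\{0,p\}$, from a collision of a type \I and a type \II cone, and from the first kind of collision between two type \II cones; I expect their total to be $\binom{p}{3}$. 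All remaining codimension-$2$ cones --- the generic case $a_1\neq a_2$, and the collision of two type \I cones with distinct residues --- are adjacent to four maximal cones and give the quadrangles.

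The exponential factors $2^{p-2}$ and $2^{p-3}$ in the edge and quadrangle counts do not come from the fixed-root space $\plano{p}{p}$ but from the extra degree of freedom in $\disc{p}{p}$: the position of the tropical double root. Passing from $\plano{p}{p}$ to $\disc{p}{p}$ via $\disc{p}{p}=\bigcup_{c}\{f(cx)\}$ adds one dimension, and a maximal cone of $\disc{p}{p}$ is recorded not only by the colliding monomials but also by which of the remaining interior monomials lie strictly above the double-root edge of the Newton diagram. Choosing such a subset of interior monomials is exactly what produces the powers of two, the precise exponent depending on how many monomials remain free after the colliding monomials and residue classes are fixed. Assembling these choices with the residue bookkeeping should yield $(p-1)(2^{p-2}+p/2-2)$ facet cones and $\binom{p-1}{2}(2^{p-3}-1)$ quadrangles.

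For the vertices I would argue by comparison with characteristic zero. As recalled in the introduction, the Newton polytope of the characteristic-zero degree-$p$ discriminant is combinatorially a $(p-1)$-cube, whose $2^{p-1}$ vertices are naturally indexed by the subsets of the interior exponents $\{1,\dots,p-1\}$. In characteristic $p$ the only change to the defining equations is that the top coefficient of the Euler derivative $\frac{\partial F}{\partial X}$ vanishes, because $p\equiv 0$; I expect this single relation to identify exactly two of the cube vertices (equivalently, to delete the one region of the complement associated with the top derivative term), so that $N_{p,p}$ has $2^{p-1}-1$ vertices. The hard part is this last step: unlike the edge and face counts, which follow mechanically from Theorems~\ref{teo:clasifica_tipos_maximales} and~\ref{teo:incidencias} once the root-position bookkeeping is set up, the vertex count requires controlling the global effect of the degeneration on the whole cube and showing that precisely one vertex is lost. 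I would use the Euler--Poincaré relation for the $(p-1)$-polytope $N_{p,p}$, together with the already-computed edge and $2$-face numbers, as an independent consistency check on the final answer, matching the small cases $p=2$ (a point) and $p=3$ (a triangle, with $3=2^{2}-1$ vertices and $3$ edges) directly.
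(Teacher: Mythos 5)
Your overall strategy is the same as the paper's: read the face numbers of $N_{p,p}$ off the dual fan (vertices correspond to full-dimensional complement regions, edges to maximal cones, $2$-faces to codimension-$2$ cones), observe that for $n=p$ only the class $[0]=\{0,p\}$ is doubled so that type \III cones vanish and type \II cones are exactly $[\{0,p\},\{i,j\}]$, and then compare with characteristic zero, where $N_{0,p}$ is a $(p-1)$-hypercube. The paper makes this comparison quantitative: only the star of the vertex $V_0$ dual to the trivial triangulation $[0,p]$ changes; $V_0$ disappears and no new vertex appears (giving $2^{p-1}-1$); the $p-1$ edges $\{0,i,p\}$ at $V_0$ disappear while the $\binom{p-1}{2}$ type \II edges $[\{0,p\},\{i,j\}]$ appear (giving $(p-1)2^{p-2}-(p-1)+\binom{p-1}{2}$); the $\binom{p-1}{2}$ quadrangles at $V_0$ disappear and none appear; and the triangles split into the $\binom{p-1}{2}$ with edges $\{i,j,0\}$, $\{i,j,p\}$, $[\{0,p\},\{i,j\}]$ and the $\binom{p-1}{3}$ with edges $[\{0,p\},\{i,j\}]$, $[\{0,p\},\{i,k\}]$, $[\{0,p\},\{j,k\}]$.

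Measured against that, your proposal has genuine gaps: each of the four counts is left as an expectation (``I expect their total to be $\binom{p}{3}$'', ``should yield'', etc.) rather than derived. Concretely: (i) for the triangles you list collision patterns but never enumerate the resulting codimension-$2$ cones, and the list conflates distinct pairs of edges of the same triangle with distinct triangles --- a triangle is one codimension-$2$ cone adjacent to three maximal cones, so the \I-\I, \I-\II and \II-\II collisions you name are not three disjoint families; (ii) the edge and quadrangle counts require identifying exactly which cells of the hypercube are lost (those of the relevant dimension incident to $V_0$) and which are gained (only the type \II edges, no new $2$-faces), and your ``interior monomials above the edge'' bookkeeping, while it is indeed the hypercube structure of the secondary polytope, is never assembled into the deltas $-(p-1)$, $+\binom{p-1}{2}$, $-\binom{p-1}{2}$; (iii) for the vertices, your suggestion that the degeneration ``identifies two cube vertices'' is not what happens --- the mechanism is that the single complement region dual to $V_0$ (polynomials whose Newton diagram has lower hull the bare segment from $0$ to $p$) ceases to be a complement region because the type \II cones subdivide it, and one must check that this subdivision creates no new full-dimensional complement region; you flag this as the hard step and do not resolve it. The Euler--Poincar\'e identity is a reasonable sanity check but cannot replace these enumerations.
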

\begin{proof}
We give a pure tropical proof of this theorem. First, note that in 
$\disc{p}{p}$ there are only cells of type \I and \II. Let $N_0$ and $N_p$ be 
the Newton polytopes of $\disc{0}{p}$ and $\disc{p}{p}$ respectively. The 
vertices of $N_q$ correspond to the cones in the complement of $\disc{q}{p}$, 
that is, combinatorial types of polynomials without double roots. Edges 
correspond to codimension 1 cones in $\disc{q}{p}$ and 2-faces of $N_q$ with 
codimension 2 cones in $\disc{q}{p}$. The Newton polytope $N_0$ of 
$\disc{0}{p}$ is the secondary polytope of $\{0,\ldots, p\}$, combinatorially a 
hypercube of dimension $p-1$ \cite{GKZ-polytope-resultant}. Its vertices 
correspond to the 
triangulations of the set $[p]=\{0,\ldots, p\}$. Note that, reducing $\mod p$, 
the only modifications appear around the vertex $V_0=(A_0A_p)^{p-1}$, 
corresponding to the triangulation $[0,p]$. This vertex is not in $N_p$, since 
$f\in C_0$ is not enough information to decide if $f\in \disc{p}{p}$ or not. 
The common cones of type \I in $\disc{0}{p}$ and $\disc{p}{p}$ are those of 
type $\{i,j,k\}$ except $\{0,i,p\}$, $1\leq i\leq p-1$ that are not cones in 
$\disc{p}{p}$. These are the edges in $N_0$ linking $V_0$ with the vertices 
$[0,i,p]$ of $N_0$. Hence, $p-1$ edges disappear reducing $\mod p$. On the 
other hand, the new edges corresponding to type \II cells $[\{0,p\},\{i,j\}]$ 
appear in $N_p$. Thus we have 
$(p-1)2^{p-2}-(p-1)+\binom{p-1}{2}=(p-1)(2^{p-2}+p/2-2)$ edges in $N_p$. In 
$N_0$ there are $\binom{p-1}{2}(2^{p-3})$ quadrangles, from these 
$\binom{p-1}{2}$ are adjacent to $V_0$ and no new quadrangle appear. Hence 
$N_p$ has $\binom{p-1}{2}(2^{p-3}-1)$ quadrangles. Finally, the number of 
triangles correspond to codimension 2 cones in $\disc{p}{p}$ that are faces of 
type \II cones. There are two possibilities, according to 
Theorem~\ref{teo:incidencias}. Either two edges of the triangle are type \I 
cones, the edges correspond to the
cones $\{i,j,p\}$, $\{i,j,0\}$, $[\{0,p\},\{i,j\}\}]$, and the vertices of the 
triangle to the subdivisions $[0,i,p]$, $[0,j,p]$, $[0,i,j,p]$. These are 
$\binom{p-1}{2}$ triangles that correspond to the quadrangles of $N_0$ that 
disappear. The other triangles are those that consist on only type \II cells. 
Their edges are of the form $[\{0,p\},\{i,j\}]$, $[\{0,p\},\{i,k\}]$, 
$[\{0,p\},\{j,k\}]$ and the vertices correspond to triangulations $[0,i,p]$, 
$[0,j,p]$ and $[0,k,p]$. These are completely new triangles and the 
corresponding faces in $\disc{p}{p}$ that lie in the cell dual to $V_0$ in 
$N_0$. There are $\binom{p-1}{3}$ such triangles. Hence, there are 
$\binom{p-1}{2}+\binom{p-1}{3}=\binom{p}{3}$ triangles in $N_p$. Note that 
there are no new vertices in $N_p$, by the analysis of the edges of the 
triangle. The vertex of $N_p$ corresponding to the triangulation $[0,i,p]$ 
correspond to the (dehomogenized, closed) cone $a_0=a_p\geq a_i\geq a_j$, 
$j\notin\{i,0,p\}$.
\end{proof}

\begin{remark}
The Theorem suggests that reducing the discriminant mod $p$, in terms of the 
Newton polytope, consists in just erasing the monomials corresponding to 
triangulations with a segment of length multiple of $p$. This is not the case. 
For instance, let $n=5$, and compare $N_{0,5}$ and $N_{3,5}$ the Newton 
polytopes of the discriminant of degree 5 in characteristics $0$ and $3$. Then 
$N_{0,5}-N_{3,5}= \{(0, 2, 4, 0, 0, 2)$, $(2, 0, 0, 4, 2, 0)$, $(2, 0, 0, 5, 0, 
1)$, $(0, 4, 0, 0, 4, 0)$, $(1, 0, 5, 0, 0, 2)\}$ which are precisely the 
triangulations with a segment of length 3. But $N_{3,5}-N_{0,5}=\{(1, 3, 1, 0, 
0, 3), (0, 3, 0, 0, 1, 3, 1)\}$ are new vertices.
\end{remark}

\subsection{Universally singular polynomials}

In this section, we analyze which polynomials have a multiple tropical root at 
$0$ independently on the characteristic. That is, 
$\planouniv{n}=\cap_{p}\planopn\cap \plano{0}{n}$. We have the following 
result.

\begin{theorem}\label{teo:univ_singular}
$\planouniv{n}$ is a rational polyhedral fan of codimension $3$ in 
$\R^n$. The cones of codimension 3 consist on polynomials such that the minimum 
is attained at three different monomials $i,j,k$ such that
\begin{itemize}
 \item $d=\rad(j-i)=\rad(k-i)=\rad(k-j)$.
 \item If we eliminate all the monomials $l\equiv i\mod d$ in $f$, then the 
minimum is attained in two different monomials $r$, $s$ such that $[r-i], 
[s-i]\in \mathbb{Z}/(d)^*$ are units $\mod d$.
\end{itemize}
\end{theorem}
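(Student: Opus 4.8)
The plan is to first replace the infinite intersection defining $\planouniv{n}$ by a finite one, deduce the fan structure from this, and only then identify the maximal cones.

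First I would show that only finitely many primes matter. If $p>n$, then every residue class modulo $p$ contains at most one monomial of $\{0,\dots,n\}$, so each Euler derivative $\partial f/\partial(x-i)$ either leaves $f$ unchanged or deletes a single monomial. Hence, by Corollary~\ref{cor:solo_necesito_p}, $f\in\planopn$ if and only if the minimum of its coefficients is attained at least twice and remains attained at least twice after deleting any one monomial, i.e.\ if and only if the minimum is attained at least three times; by the theorem describing $\plano{0}{n}$ this is exactly the condition $f\in\plano{0}{n}$. Therefore $\planopn=\plano{0}{n}$ for every $p>n$ and
\[\planouniv{n}=\plano{0}{n}\cap\bigcap_{p\le n}\planopn.\]
Each factor is a rational polyhedral fan, and a finite intersection of rational polyhedral fans is again one; this gives the first assertion.

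Next I would prove that the cones described lie in $\planouniv{n}$ and have codimension $3$. Fix $f$ whose minimum $\mu$ is attained exactly at $\{i,j,k\}$ with $d=\rad(j-i)=\rad(k-i)=\rad(k-j)$, and whose minimum over the monomials $l\not\equiv i\pmod d$ is attained exactly at $\{r,s\}$ with $[r-i],[s-i]\in(\mathbb{Z}/(d))^{*}$. For a prime $p\nmid d$ the exponents $i,j,k$ fall into three distinct classes modulo $p$, so deleting any single class keeps at least two of them and $0$ stays a root of every $\partial f/\partial(x-i)$; thus $f$ is singular in characteristic $p$. For a prime $p\mid d$ we have $i\equiv j\equiv k\pmod p$, so the only dangerous derivative deletes this common class; here the unit hypothesis is exactly what is needed, since $\gcd(r-i,d)=\gcd(s-i,d)=1$ forces $r\not\equiv i$ and $s\not\equiv i\pmod p$, so $r,s$ survive, and because deleting the class modulo $p$ removes a superset of the class modulo $d$, the surviving minimum is still attained at $r$ and $s$. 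Together with the characteristic-$0$ condition this shows $f\in\planouniv{n}$, and the cone is cut out by the independent equalities $a_i=a_j=a_k$ and $a_r=a_s$, hence has codimension $3$.

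Finally, and this is the main obstacle, I would prove there are no other maximal cones. Taking a generic $f$ in a maximal cone and letting $L_0$ be the set of minimizing monomials, the characteristic-$0$ condition forces $|L_0|\ge 3$. The key step is to rule out, for every prime $p\le n$, the configuration in which exactly two elements of a minimizing triple share a residue class while the third does not: deleting that class would leave the minimum attained only once, contradicting singularity in characteristic $p$. Forbidding this for all primes forces any three minimizing exponents to have pairwise differences with the same set of prime divisors, i.e.\ the radical condition; and a second application of the same principle to $\partial f/\partial(x-i)$ for the primes $p\mid d$ produces two further minimizers whose offsets from $i$ must be coprime to every prime dividing $d$, i.e.\ units modulo $d$. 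The delicate part of this direction is controlling the degeneracies: one must show that in a maximal cone the minimum is attained at exactly three monomials, and that configurations where it is attained at more monomials either reduce to the stated description or fail to be maximal, and that distinct data $(\{i,j,k\},d,\{r,s\})$ give distinct cones. I expect this combinatorial bookkeeping, rather than the characteristic-$p$ reasoning, to be the hardest and most error-prone part of the argument, just as the completeness step was in Theorem~\ref{teo:clasifica_tipos_maximales}.
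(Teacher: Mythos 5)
Your overall strategy is the same as the paper's: reduce to finitely many primes ($p>n$ for you, $p>n+1$ in the paper; either works) to get the fan structure as a finite intersection, verify sufficiency of the two conditions by splitting into $p\nmid d$ and $p\mid d$, and extract necessity from the observation that no two of three minimizing classes can coincide modulo $p$ unless all three do. Your first two parts are sound and match the paper's proof.

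The gap is in the completeness direction, which you explicitly defer, and there are two concrete points you would need. First, the claim that the fan has codimension exactly $3$ (no codimension-$2$ cells) is not established anywhere in your sketch; the paper gets it by noting that the facets of $\plano{0}{n}$ are all of type \I\ while those of $\plano{2}{n}$ are all of type \III\ (here it matters that $2\mid d$ automatically, since $k-i=(k-j)+(j-i)$ forces $d$ to be even), and these codimension-$2$ cones are never equal, so the intersection drops to codimension $\geq 3$. Second, and more substantively: when you apply $\frac{\partial f}{\partial x-i}$ modulo each prime $q\mid d$, you a priori obtain a pair of second-level minimizers $r_q,s_q$ that could depend on $q$; your phrase ``produces two further minimizers whose offsets from $i$ must be coprime to every prime dividing $d$'' silently assumes a single pair serves all $q$. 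The paper closes exactly this hole with a maximality argument: if the pairs varied with $q$, the cell would be cut out by at least four independent equalities, hence have codimension $\geq 4$, contradicting maximality; with $r,s$ fixed, $[r-i]\not\equiv 0 \bmod q$ for every $q\mid d$ is precisely $[r-i]\in\mathbb{Z}/(d)^*$. This codimension-counting device is also what disposes of your remaining worries (that the minimum is attained at exactly three monomials in a maximal cell, etc.), so supplying it is the one missing idea rather than an open-ended bookkeeping task.
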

\begin{proof}
First, note that if $p>n+1$ is a prime, then the Euler derivative in 
characteristic $p$ coincides with one Euler derivative in characteristic zero, 
so $\plano{0}{n}=\planopn$. This means that $\planouniv{n}$ is the 
intersection of finitely many rational polyhedral fans in $\R^{n+1}$. So it is 
a rational polyhedral fan. Now, note that $\plano{0}{n}$ only contains facets 
of type \I, while $\plano{2}{n}$ contains only facets of type \III. This means 
that $\planouniv{n}$ does not contain any cell of codimension 2. Consider a 
polynomial $f$ satisfying the hypothesis. It is clear that the corresponding 
cell is of codimension three, since we have only three conditions 
$a_i=a_j=a_j<a_r=a_s$. It is also clear that $f\in \plano{0}{n}$. Now, 
$(k-i)=(k-j)+(j-i)$ and have the same radical $d$. In this case, $d$ must be 
even and $i,j,k$ must be of the same parity and $r-i,s-i$ must be odd. This 
means that $f\in \plano{2}{n}$. Next, let $p$ be a prime not dividing $d$. 
Taking an Euler derivative in characteristic $p$ can only erase at most one of 
the monomials $\{i,j,k\}$ in the support and the minimum in the derivative will 
be attained in the other two monomials. It follows that $f\in \planopn$. 
Finally, let $q$ be a prime dividing $d$. If we take a derivative $\mod q$, 
then either the three monomials $\{i,j,k\}$ belong to the support of the 
derivative or we are considering $\frac{\partial f}{\partial x-i}$, but since 
$[r]\neq [i] \neq [s]\mod q$, and we are eliminating all the monomials 
congruent to $i$ mod $q$ (and, in particular all the monomials congruent to $i$ 
mod $d$). The minimum will be attained at $r$ and $s$ and $f\in \plano{q}{n}$.

Assume now that $f$ belongs to a cell in $\planouniv{n}$ of maximal dimension. 
Since $f$ is singular in characteristic zero, the minimum of the coefficients 
is attained in three different monomials $i,j,k$. Also, if there is a prime $p$ 
such that two of the classes $[i],[j],[k]$ are equal $\mod p$, then the three 
classes must be equal $\mod p$. This means that the primes dividing $j-i$, 
$k-i$, $k-j$ are the same and we have the first condition. For the second 
condition, note that the primes not dividing $d$ do not pose any problem and 
that, for any prime $q$ dividing $d$ taking the derivative $\frac{\partial 
f}{\partial x-i} \mod q$ eliminates all the monomials congruent to $i$ $\mod 
d$. For any prime $q$ dividing $d$ there are two monomials $r_q$, $s_q$ where 
the minimum is attained in $\frac{\partial f}{\partial x-i} \mod q$. But if 
these two monomial depend on $q$, then the codimension of the cell must be at 
least 4 and we are out of hypothesis. Hence, the same monomials $r$, $s$ are 
valid for every $q$. Since $[r]\neq [i]\mod q$ for any $q$ dividing $d$, then 
$[r-i]\in \mathbb{Z}/(d)^*$ and the same happens to $[s-i]$.
\end{proof}

\begin{theorem}
If $n$ is big enough, then $\planouniv{n}$ is not a pure rational polyhedral 
fan in codimension three, there are maximal faces of codimension 
$\mathcal{O}(\log(n))$.
\end{theorem}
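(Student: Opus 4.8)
The plan is to show non‑purity by exhibiting, for arbitrarily large $n$, explicit \emph{maximal} cones of $\planouniv{n}$ whose codimension tends to infinity with $n$; since $\planouniv{n}$ also carries the codimension‑$3$ cones of Theorem~\ref{teo:univ_singular}, this forces the fan not to be pure. I restrict attention to the simplest cones, namely those of the form $C_S=\{f:\ a_i=m\ (i\in S),\ a_i>m\ (i\notin S)\}$, where $S\subseteq\{0,\ldots,n\}$ is the set of monomials attaining a single minimal value $m$ and all other coefficients are strictly larger; such a $C_S$ is a relatively open cone of codimension $|S|-1$. By Corollary~\ref{cor:solo_necesito_p}, a point of $C_S$ lies in $\planouniv{n}$ exactly when, for every prime $p$ and every residue $c\bmod p$, the derivative $\partial f/\partial(x-c)$ (which erases precisely the monomials $\equiv c$) still attains its minimum twice; for $C_S$ this is the purely combinatorial condition that for every prime $p$ no residue class modulo $p$ contains more than $|S|-2$ elements of $S$, together with $|S|\ge 3$ from characteristic zero. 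Call such an $S$ \emph{valid}. The first step is the observation that $C_S$ is a maximal cone of $\planouniv{n}$ exactly when $S$ is valid and \emph{minimal valid}, i.e.\ $S\setminus\{x\}$ is invalid for every $x\in S$: relaxing the single equality $a_x=m$ carries a generic point of $C_S$ into $C_{S\setminus\{x\}}$, and this remains inside $\planouniv{n}$ precisely when $S\setminus\{x\}$ is valid.

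Next I translate minimality into a counting statement. If $S$ is valid then every class modulo every prime has at most $|S|-2$ elements, so $S\setminus\{x\}$ can become invalid only if some class modulo some prime $p$ contains all but one of its $|S|-1$ remaining elements; given validity of $S$, this forces that class to contain exactly $|S|-2$ elements of $S$, with $x$ one of the two elements of $S$ lying outside it. Thus each $x\in S$ must be \emph{pinned} by a prime $p(x)$ admitting a class that holds exactly $|S|-2$ elements of $S$, with $x$ among the two outside. For $|S|>4$ a class of size $|S|-2>|S|/2$ is unique per prime, so each prime pins at most the two elements outside it; hence a minimal valid set of size $s$ requires at least $s/2$ distinct primes, each furnishing one large class.

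The construction saturates this bound. Fix $k\ge 3$, let $q_1<\cdots<q_k$ be the first $k$ primes, set $s=2k$, and group the $s$ elements into a perfect matching of $k$ pairs, pair $i$ assigned to $q_i$. Using the Chinese Remainder Theorem I choose residues so that modulo $q_i$ the $s-2$ elements outside pair $i$ share a common class $c_i$ while the two elements of pair $i$ avoid $c_i$; this determines the $s$ elements modulo $Q=q_1\cdots q_k$, and they may be taken distinct in $[0,Q)$. Modulo each $q_i$ the class $c_i$ has exactly $s-2$ elements, so $S$ is valid at $q_1,\ldots,q_k$ and each element is pinned (as the outsider of its own pair), whence $S$ is minimal; validity at the remaining primes holds for a suitable lift, since every prime below $s$ is already among the $q_i$ and coincidences at larger primes can be broken using the freedom in the CRT lift. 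Provided $Q=\prod_{i\le k}q_i\le n$, the set $S$ sits in $\{0,\ldots,n\}$ and $C_S$ is a maximal cone of codimension $s-1=2k-1$. Choosing $k=k(n)$ maximal with $\prod_{i\le k}q_i\le n$ gives, by the prime number theorem in the form $\sum_{q\le x}\log q\sim x$, the estimates $q_k\sim\log n$ and $k\sim\log n/\log\log n$, so $C_S$ has codimension $2k-1$, which is $\mathcal O(\log n)$ and tends to infinity.

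The main obstacle is the maximality/minimality dictionary combined with guaranteeing validity at all primes at once: the pinning argument shows that each prime can lock only two coordinates, which is exactly what forces $\sim s/2$ distinct primes and, through the primorial bound $\prod_{i\le k}q_i\le n$, produces the logarithmic order of the codimension. The delicate points are therefore verifying that a single CRT lift can be made valid at \emph{every} prime rather than only at $q_1,\ldots,q_k$, and confirming that no relaxation of a defining equality escapes $\planouniv{n}$, so that $C_S$ is genuinely maximal. Together with the persistence of the codimension‑$3$ cones of Theorem~\ref{teo:univ_singular}, this yields maximal faces of two different dimensions once $n$ is large enough, establishing that $\planouniv{n}$ is not pure.
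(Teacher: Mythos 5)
Your reduction to cones of the form $C_S$ (all of $S$ tied at a common minimum, everything else strictly above) and your validity criterion for such cones are correct, but the step on which the whole proof rests --- ``$C_S$ is a maximal cone of $\planouniv{n}$ exactly when $S$ is minimal valid'' --- is false, because you only test perturbations that land in a cone $C_{S\setminus\{x\}}$ of the same shape. The conditions defining $\planopn$ involve the minima of the Euler derivatives, which are computed on \emph{different} level sets of the coefficient vector, so a cone of $\planouniv{n}$ containing $C_S$ in its boundary is in general a \emph{stratified} cone, recorded by a partition of the support into several tied levels, not just by the bottom level set. Your own construction exhibits the failure: lift the pair $P_1$ to a common value $m+\epsilon$ while keeping $S\setminus P_1$ at $m$. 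For the prime $q_1$ and the class $c_1$, the Euler derivative erases all of $S\setminus P_1$ and the minimum of what survives is attained twice, at the two elements of $P_1$ (now at level $m+\epsilon$); for every other prime and class at least two elements of $S\setminus P_1$ survive at level $m$; and at least three monomials remain at the overall minimum. Hence the perturbed polynomial is still in $\planouniv{n}$, so $C_S$ lies in the closure of a strictly larger cone of the fan and is \emph{not} maximal. Iterating, one may lift $P_1,\ldots,P_{k-2}$ to distinct levels, so the maximal cone containing your $C_S$ has codimension roughly $k+1$, not $2k-1$.

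The conclusion is salvageable --- $k+1$ is still $\mathcal{O}(\log n)$ and tends to infinity, which is all the theorem needs --- but the proof must be rebuilt around the stratified cone itself: identify it explicitly and prove \emph{its} maximality by showing that no level set can be split further (here the prime conditions do the work: splitting a pair $P_i$ leaves the derivative modulo $q_i$ in direction $c_i$ with a minimum attained only once, and shrinking the bottom level below three monomials violates the characteristic-zero condition). This is in effect what the paper does: it directly constructs the stratified polynomial taking the value $0$ at $0,d,2d$ with $d=\prod_{i\le k}p_i$ and the value $i$ at the pair $d/p_i,\ d/p_i+d$, yielding a cell of codimension $k+2$. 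Two further points in your write-up would also need tightening: validity at the primes outside $\{q_1,\ldots,q_k\}$ is asserted, not proved, and the CRT lift cannot always be taken inside $[0,Q)$ --- the two elements of the pair assigned to $q_1=2$ are forced to be congruent modulo $Q$, so you need roughly $2Q\le n$ rather than $Q\le n$.
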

\begin{proof}
Take $n= 2\cdot 4^k$ and consider the first $k$ primes $p_1,\ldots, p_k$. Let 
$d=\prod_{i=1}^k p_i < 4^k < n$ by \cite{Thebook}. Take the polynomial $f$ such 
that takes the value $0$ at $0,d,2d$. And, for each $1\leq i\leq k$ takes the 
value $i$ at the monomials $d/p_i, d/p_i+d$. And, for every other monomial 
takes arbitrary values bigger than $k$. It is clear by construction that the 
polynomial belongs to $\planouniv{n}$. Because, for every prime $q$ and every 
class $l$, the minimum of $\frac{\partial f}{\partial x-l} \mod q$ is attained 
in (at least) two of the monomials in $\{i,j,k\}$ or the minimum is attained in 
$d/q,d/q+d$ if $q|d$  and $[l]=[i]\mod q$. So, this is a cell of codimension 
$k+2$. It follows that we can always construct maximal cells of codimension 
$\mathcal{O}(\log(n))$.
\end{proof}

\section{A note on the $p$-adic case}\label{sec:padics}

So far we have studied only the equicharacteristic case in which the field $\K$ 
has the same characteristic as the residue field $k$. The $p$-adic case has 
also interest on its own. however, under a $p$-adic valuation, the discriminant 
is no longer a fan. Because non-zero constants appear while taking Euler 
derivatives. These tropical discriminants are polyhedral complexes of 
codimension 1 in $\R^{n+1}$. We show that the $p$-adic discriminant performs a 
\emph{kind of interpolation} between the discriminant in characteristic zero 
and characteristic $p$.

If we take a generic point in a maximal cell of the discriminant, the 
corresponding polynomial $f$ will have only one double root $a$. Let $L_0$ be 
the set of monomials where the minimum is attained at $a$ and $L_1$ the set of 
monomials where the second minimum is attained at $a$.

\begin{theorem}\label{teo:padic}
Let $\padicdisc_p$ be the discriminant of polynomials of degree $n$ under the 
$p$-adic valuation. Then
\begin{itemize}
\item On a small ball centered at the origin, the $p$-adic discriminant equals 
the discriminant in characteristic $p$, $\padicdisc_p\cap 
B(0,\epsilon)=\discn{p}\cap B(0,\epsilon)$ for $0<\epsilon<<1$.
\item In the subset of $\mathbb{R}^n$ where $L_0<<L_1$, the maximal cells in 
common of the $p$-adic discriminant and the characteristic $p$ tropical 
discriminant are precisely the cones of type \I.
\end{itemize}
\end{theorem}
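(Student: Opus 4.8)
The plan is to prove each bullet by reducing the membership question to a pointwise comparison of the tropical singularity condition for the $p$-adic and for the characteristic $p$ Euler derivatives, and then exploiting the single structural difference between them recorded in the Remark on Euler derivatives: the $p$-adic derivative $\frac{\partial g}{\partial(x-c)}$ keeps every monomial $m$ at lattice distance divisible by $p$ from $c$ but raises its coefficient by $v_p(m-c)\ge 1$, whereas the characteristic $p$ derivative deletes those monomials outright; in both cases the monomial $m=c$ is removed. Everything will hinge on weighing these shifts $v_p(m-c)$, which are bounded by $\log_p n$, against the relevant scale of the coefficients.

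For the first bullet I would first observe that every tropical root of a polynomial $g$ all of whose coefficients lie in $(-\epsilon,\epsilon)$ is itself contained in $(-2\epsilon,2\epsilon)$, since a root equals a ratio $(g_{m'}-g_m)/(m-m')$ of coefficient differences over a nonzero integer gap. Thus for $g\in B(0,\epsilon)$ any singular point $a$ is $O(\epsilon)$-small, and the value $g_m+ma$ of every monomial at $a$ is $O(n\epsilon)$ in absolute value. Fixing $\epsilon$ small enough that $O(n\epsilon)<\tfrac12$, the raised coefficients $g_m+v_p(m-c)+ma$ with $p\mid m-c$ exceed $\tfrac12$ and can never attain the minimum of $\frac{\partial g}{\partial(x-c)}$ at $a$, so the minimum and the set where it is attained coincide with those of the characteristic $p$ derivative. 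Hence $a\in\mathcal{T}\bigl(\frac{\partial g}{\partial(x-c)}\bigr)$ holds $p$-adically if and only if it holds in characteristic $p$. The $p$-adic test a priori ranges over all integer forms $L=\alpha x+\beta$, but when $p\mid\alpha$ no monomial is shifted and one only recovers $a\in\mathcal{T}(g)$, and when $p\nmid\alpha$ the form selects the single residue class $-\beta\alpha^{-1}\bmod p$; so it reduces to the same finite family of residue-class conditions, and by Corollary~\ref{cor:solo_necesito_p} the two notions of singularity agree, giving $\padicdisc_p\cap B(0,\epsilon)=\discn{p}\cap B(0,\epsilon)$.

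For the second bullet I would work in the regime $L_0\ll L_1$, read as: the lowest value $\mu_0$ attained at the double root $a$ by the monomials of $L_0$ is separated from the second value $\mu_1$ attained by $L_1$ by a gap $\mu_1-\mu_0$ exceeding $\log_p n$, so a bounded $p$-adic shift can never lift an $L_0$-monomial to the $L_1$-level. A maximal cell of $\discn{p}$ is of type \I, \II\ or \III\ by Theorem~\ref{teo:clasifica_tipos_maximales}. For type \I, with $L_0=\{i,j,k\}$ carrying three distinct residues, every derivative $\frac{\partial g}{\partial(x-c)}$ affects at most one of $i,j,k$, so at least two of them stay unshifted at $\mu_0$ while all other monomials sit at $\mu_1\gg\mu_0$; thus $a$ remains a tropical root of every $p$-adic Euler derivative and the cell also lies in $\padicdisc_p$, i.e.\ it is a common maximal cell. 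For types \II\ and \III, where $L_0=\{i,j\}$ with $[i]=[j]=\rho$, I would exhibit a single obstructing derivative: taking $c=i$, the $p$-adic derivative $\frac{\partial g}{\partial(x-i)}$ deletes $i$ and raises $j$ to $\mu_0+v_p(j-i)$, while every surviving monomial sits at height $\ge\mu_1\gg\mu_0+v_p(j-i)$; the minimum is then attained only once, $a\notin\mathcal{T}\bigl(\frac{\partial g}{\partial(x-i)}\bigr)$, and the polynomial is not $p$-adically singular. So no type \II\ or \III\ cell survives into $\padicdisc_p$ in this regime, and the common maximal cells are exactly those of type \I.

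The main obstacle is the second bullet, and specifically the quantitative matching of scales: one must make precise that ``$L_0\ll L_1$'' forces the gap $\mu_1-\mu_0$ to dominate every $p$-adic shift $v_p(m-c)\le\log_p n$ simultaneously, which is what makes the collapse of the two-monomial configurations of types \II\ and \III\ irreversible while keeping the three-monomial configuration of type \I\ intact. I would also need to confirm that the surviving type \I\ loci are genuine maximal cells of $\padicdisc_p$ (codimension one, as the double root $a$ varies) rather than lower-dimensional strata, and to note that once $\mu_1-\mu_0$ becomes comparable to the shifts the types \II\ and \III\ reappear, which is exactly the interpolation between the two equicharacteristic discriminants advertised at the start of the section.
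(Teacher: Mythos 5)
Your proposal is correct and follows essentially the same route as the paper: for the small ball, the $p$-adic shifts of size $\geq 1$ dominate the coefficients so shifted monomials never reach the minimum and the $p$-adic Euler derivative behaves exactly like the characteristic-$p$ one; for the regime $L_0\ll L_1$, type \I{} cells survive because every derivative leaves two of the three distinct-residue monomials untouched, while types \II{} and \III{} collapse since $\frac{\partial f}{\partial x-i}$ deletes $i$ and leaves the shifted $j$ alone at the minimum. Your added quantitative bookkeeping (locating the root in $O(\epsilon)$, bounding the shifts by $\log_p n$, and reducing the a priori infinite family of forms $L$ to the residue-class conditions) only makes explicit what the paper leaves implicit.
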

\begin{proof}
Taking the Euler derivative with respect to $x-i$ of a polynomial in $p$-adics 
increases the value of the monomials $[j]=[i]$ by the $p$-adic valuation of 
$i-j$. If the coefficients are small enough, taking this derivative has the 
same 
effect as deleting the monomials for the matter of finding the minimum. Hence 
the 
first item follows.

For the second item, Note that if we have a type \I cone in the discriminant of 
characteristic $p$ then the minimum is attained at three monomials $i,j,k$ 
defining three different classes $[i],[j],[k] \mod p$. Taking Euler derivatives 
$\frac{\partial f}{\partial x-i}$, $\frac{\partial f}{\partial 
x-j}$,$\frac{\partial f}{\partial x-k}$, in the $p$-adic case do not modify the 
other two monomials there the minimum is attained, so these cells are in common 
between the two discriminants. Now, if we have a type \II cone in the 
characteristic $p$ discriminant, then the minimum is attained at two different 
monomials $i$, $j$ with $[i]$, $[j]$. Since $L_0<<L_1$, it follows that the 
minimum in the $p$-adic Euler derivative $\frac{\partial f}{\partial x-i}$ is 
attained in the monomial $j$ alone. So, it is not a cell in the $p$-adic 
discriminant.
\end{proof}

\section*{Acknowledgements}
The author is supported by the Spanish ``Ministerio de Ciencia e Innovaci\'on'' 
and ``European Regional Development Fund" (FEDER) under the research project 
MTM2011-25816-C02-02.

\end{document}